\nonstopmode \numberwithin{equation}{section}
\newtheorem{thm}{Theorem}[section]
\newtheorem{cor}{Corollary}[section]
\newtheorem{lem}{Lemma}[section]
\newtheorem{prop}{Proposition}[section]
\newtheorem{conj}{Conjecture}
\theoremstyle{definition}
\newtheorem{prob}{Problem}[section]
\newtheorem{rem}{Remark}[section]
\newcounter{minutes}\setcounter{minutes}{\time}
\newcounter{hours}\setcounter{hours}{\time}
\newcounter {own}
\def\theown {\thesection       .\arabic{own}}
\newenvironment{pf}[1][]{%
 \vskip 3mm
 \noindent
 \ifthenelse{\equal{#1}{}}%
  {{\slshape Proof. }}%
  {{\slshape #1.} }%
 }%
{\qed\bigskip}
\newcounter{alphabet}
\newcounter{tmp}
\def\be{\begin{equation}}
\def\ee{\end{equation}}
\newcommand{\bee}{\begin{enumerate}}
\newcommand{\eee}{\end{enumerate}}
\newcommand{\blem}{\begin{lem}}
\newcommand{\elem}{\end{lem}}
\newcommand{\bthm}{\begin{thm}}
\newcommand{\ethm}{\end{thm}}
\newcommand{\bcor}{\begin{cor}}
\newcommand{\ecor}{\end{cor}}
\newcommand{\beg}{\begin{examp}}
\newcommand{\eeg}{\end{examp}}
\newcommand{\begs}{\begin{examples}}
\newcommand{\eegs}{\end{examples}}
\newcommand{\bdefe}{\begin{defin}}
\newcommand{\edefe}{\end{defin}}
\newcommand{\bprob}{\begin{prob}}
\newcommand{\eprob}{\end{prob}}
\newcommand{\bei}{\begin{itemize}}
\newcommand{\eei}{\end{itemize}}
\newcommand{\bcon}{\begin{conj}}
\newcommand{\econ}{\end{conj}}
\newcommand{\bcons}{\begin{conjs}}
\newcommand{\econs}{\end{conjs}}
\newcommand{\bprop}{\begin{prop}}
\newcommand{\eprop}{\end{prop}}
\newcommand{\br}{\begin{rem}}
\newcommand{\er}{\end{rem}}
\newcommand{\brs}{\begin{rems}}
\newcommand{\ers}{\end{rems}}
\newcommand{\bo}{\begin{obser}}
\newcommand{\eo}{\end{obser}}
\newcommand{\bos}{\begin{obsers}}
\newcommand{\eos}{\end{obsers}}
\newcommand{\bpf}{\begin{pf}}
\newcommand{\epf}{\end{pf}}
\newcommand{\ba}{\begin{array}}
\newcommand{\ea}{\end{array}}
\newcommand{\beq}{\begin{eqnarray}}
\newcommand{\beqq}{\begin{eqnarray*}}
\newcommand{\eeq}{\end{eqnarray}}
\newcommand{\eeqq}{\end{eqnarray*}}
\begin{document}

\title{On a class of univalent functions defined by a differential inequality}

\author{Md Firoz Ali}
\address{Md Firoz Ali,
Department of Mathematics,
National Institute of Technology Calicut,
Calicut- 673601, Kerala, India.}
\email{ali.firoz89@gmail.com, firozali@nitc.ac.in}

\author{Vasudevarao Allu}
\address{Vasudevarao Allu,
Discipline of Mathematics,
School of Basic Sciences,
Indian Institute of Technology  Bhubaneswar,
Argul, Bhubaneswar, PIN-752050, Odisha (State),  India.}
\email{avrao@iitbbs.ac.in}

\author{Hiroshi Yanagihara}
\address{Department of Applied Science,
	Faculty of Engineering,
	Yamaguchi University,
	Tokiwadai, Ube 755,
	Japan}
\email{hiroshi@yamaguchi-u.ac.jp}

\subjclass[2010]{Primary 30C45, 30C55}
\keywords{Analytic, univalent, starlike, convex functions, extreme points, closed convex hull.}

\def\thefootnote{}
\footnotetext{ {\tiny File:~\jobname.tex,
printed: \number\year-\number\month-\number\day,
          \thehours.\ifnum\theminutes<10{0}\fi\theminutes }
} \makeatletter\def\thefootnote{\@arabic\c@footnote}\makeatother

\begin{abstract}
For $0<\lambda\le 1$, let $\mathcal{U}(\lambda)$ be the class analytic functions $f(z)= z+\sum_{n=2}^{\infty}a_n z^n$ in the unit disk $\mathbb{D}$ satisfying $|f'(z)(z/f(z))^2-1|<\lambda$ and $\mathcal{U}:=\mathcal{U}(1)$. In the present article, we prove that the class $\mathcal{U}$ is contained in the closed convex hull of the class of starlike functions and using this fact, we solve some extremal problems such as integral mean problem and arc length problem for functions in $\mathcal{U}$. By means of the so called theory of star functions, we also solve the integral mean problem for functions in $\mathcal{U}(\lambda)$. We also obtain the estimate of the Fekete-Szeg\"{o} functional and the pre-Schwarzian norm of certain nonlinear integral transform of functions in $\mathcal{U}(\lambda)$. Further, for the class of meromorphic functions which are defined in $\Delta:=\{\zeta\in\mathbb{\widehat{C}}:|\zeta|>1\}$ and associated with the class $\mathcal{U}(\lambda)$, we obtain a sufficient condition for a function $g$ to be an extreme point of this class.

\end{abstract}

\thanks{}

\maketitle
\pagestyle{myheadings}
\markboth{Md Firoz Ali,  Vasudevarao Allu and Hiroshi Yanagihara}{On a class of  univalent functions defined by a differential inequality}

\section{Introduction}
We denote the complex plane by $\mathbb{C}$ and the extended complex plane by $\mathbb{\widehat{C}}=\mathbb{C}\cup\{\infty\}$. Let $\mathcal{H}$ denote the class of analytic functions in the unit disk $\mathbb{D}:=\{z\in\mathbb{C}:\, |z|<1\}$. Then $\mathcal{H}$ is a locally convex topological vector space endowed with the topology of uniform convergence over compact subsets of $\mathbb{D}$. Denote by $\mathcal{A}$ the subclass of $\mathcal{H}$ of functions $f$ with Taylor series
\begin{equation}\label{p9-001}
f(z)= z+\sum_{n=2}^{\infty}a_n z^n.
\end{equation}
The subclass $\mathcal{S}$ of $\mathcal{A}$, consisting of univalent (one-to-one) functions  has attracted much interest for over a century, and is a central area of research in the theory of complex analysis.

Although the class $\mathcal{S}$ is the main attraction, various geometric subclasses (e.g. starlike, convex and close-to-convex) have been extensively studied,  some of which appear  naturally in  different areas in the  theory of quasiconformal mappings. A function $f\in\mathcal{A}$ is called starlike (respectively convex), if $f(\mathbb{D})$ is starlike with respect to the origin (respectively convex). Let $\mathcal{S}^*$ and $\mathcal{C}$ denote the class of starlike and convex functions in $\mathcal{S}$, respectively. It is well-known that a function $f\in\mathcal{A}$ belongs to $\mathcal{S}^*$ if, and only, if ${\rm Re\,}\left(zf'(z)/f(z)\right)>0$ for $z\in\mathbb{D}$. Similarly, a function $f\in\mathcal{A}$ belongs to $\mathcal{C}$ if, and only if, ${\rm Re\,}\left(1+(zf''(z)/f'(z))\right)>0$ for $z\in\mathbb{D}$. From the above, it is easy to see that $f\in\mathcal{S}^*$ if, and only if, $J[f]\in\mathcal{C}$, where $J[f]$ denotes the Alexander transform of $f\in\mathcal{A}$ defined by
$$
J[f](z):=\int_0^z \frac{f(\xi)}{\xi}\,d\xi=\int_0^1 \frac{f(tz)}{t}\,dt.
$$
In 1960, Biernacki claimed that $f\in\mathcal{S}$ implies $J[f]\in\mathcal{S}$, but this turned out to be wrong (see \cite[Theorem 8.11]{Duren-book}). Later, Kim and Merkes \cite{Kim-Merkes-1972} have considered the nonlinear integral transform $J_\alpha$ defined by
\begin{equation}\label{p9-005}
J_\alpha[f](z):=\int_0^z \left(\frac{f(\xi)}{\xi}\right)^{\alpha}\,d\xi
\end{equation}
for complex numbers $\alpha$ and for the functions $f$ in the class
$$
\mathcal{ZF}=\{f\in\mathcal{A}: f(z)\ne 0\quad\text{for all}\quad 0<|z|<1\}
$$
and proved that $J_\alpha(\mathcal{S}):=\{J_\alpha[f]:f\in\mathcal{S}\} \subset \mathcal{S}$ for $|\alpha|\le 1/4$. For starlike functions, it is known that $J_\alpha(\mathcal{S}^*)\subset \mathcal{S}$ when either $|\alpha|\le 1/2$ or $\alpha\in[1/2,3/2]$ (see \cite{Aksentev-Nezhmetdinov-1982,Singh-Chichra-1977}).

For $0<\lambda\le 1$, let $\mathcal{U}(\lambda)$ be the class of functions $f\in\mathcal{A}$ satisfying
$$
\left|f'(z)\left(\frac{z}{f(z)}\right)^2-1 \right|<\lambda \quad\mbox{ for } z\in\mathbb{D}.
$$
Since $f'(z)(z/f(z))^2\ne 0$ in $\mathbb{D}$, it follows that every $f\in\mathcal{U}(\lambda)$ is non-vanishing in $\mathbb{D}\setminus \{0\}$. We set $\mathcal{U}:=\mathcal{U}(1)$. It is also clear that every $f\in\mathcal{U}(\lambda)$ is locally univalent. Moreover, it is well-known that every $f\in\mathcal{U}$ is univalent in $\mathbb{D}$, i.e., $\mathcal{U}\subsetneq\mathcal{S}$ (see \cite{Aksentev-1958}) and hence, for $0\le \lambda<1$, one has $\mathcal{U}(\lambda)\subsetneq\mathcal{S}$. On the other hand, it follows from \cite{Fournier-Ponnusamy-2007,Obradovic-Ponnusamy-2001} that neither $\mathcal{U}$ is included in $\mathcal{S}^*$ nor $\mathcal{S}^*$ is included in $\mathcal{U}$. However, for $0<\lambda<1/\sqrt{2}$, functions in $\mathcal{U}(\lambda)$ are starlike with the with the additional assumption that $f''(0)=0$, . In 1977, Singh \cite{Singh-1977} obtained an estimate for the radius of starlikeness of the class $\mathcal{U}$ which is surprisingly close to unity.

\bigskip
One of the important notions in geometric function theory is subordination. Let $f$ and $g$ be two analytic functions in $\mathbb{D}$. We say that $f$ is subordinate to $g$, written as $f \prec g$ or $f(z) \prec g(z)$, if there exists an analytic function $\omega: \mathbb{D} \rightarrow \mathbb{D}$ with $\omega(0)=0$ such that $f(z)= g(\omega(z))$ for $z\in\mathbb{D}$. In the case, $g$ is univalent in $\mathbb{D}$, subordination is equivalent to $f(0)= g(0)$ and $f(\mathbb{D})\subseteq g(\mathbb{D})$.

In 1995, Obradovi\'{c} \cite{Obradovic-1995} proved that if $f\in\mathcal{U}$ then $f(z)/z\prec k(z)/z$, where $k(z)=z/(1-z)^2$ is the Koebe function. Recently, this result has been generalized for the class $\mathcal{U}(\lambda)$ by Obradovi\'{c} {\it et al.} \cite{Obradovic-Ponnusamy-Wirths-2016}, where it has been proved that if $f\in\mathcal{U}(\lambda)$ then
\begin{equation}\label{p9-010}
\frac{f(z)}{z}\prec \frac{k_{\lambda}(z)}{z}\quad\mbox{with}\quad k_{\lambda}(z)=\frac{z}{(1-z)(1-\lambda z)}.
\end{equation}
Clearly, $k_1(z)=k(z)$ is the well-known Keobe function.

Suppose  $X$ is a linear topological space and $U\subseteq X$.
The closed convex hull of $U$, denoted by $\overline{co}\,U$, is defined as the intersection of all closed convex sets containing $U$.
For $U\subseteq V\subseteq X$, we say that $U$ is an extremal subset of $V$ if $u=tx+(1-t)y$, where $u\in U$, $x,y\in V$ and $0<t<1$ then $x$ and $y$ both belong to $U$. An extremal subset of $U$ consisting of just one point is called an extreme point of $U$. We denote  the set of all extreme points of $U$ by $E(U)$. For a general reference and for many important results on this topic, we refer to \cite{Hallenbeck-MacGregor-1984}.

\bigskip
In the present article, among other results, we prove that the class $\mathcal{U}$ is contained in the closed convex hull of $\mathcal{S}^*$, i.e., $\mathcal{U}\subsetneq\overline{co}\,\mathcal{S}^*$ and using this fact, we solve some extremal problems such as integral mean problem and arc length problem for functions in $\mathcal{U}$. We also consider the nonlinear integral transform $J_\alpha$ defined by (\ref{p9-005}) for functions in $\mathcal{U}(\lambda)$ and obtain its pre-Schwarzian norm estimates. We also discuss the Fekete-Szeg\"{o} problem for functions in $\mathcal{U}(\lambda)$. Finally, we consider the class $\mathcal{M}_{0}(\lambda)$ of meromorphic functions defined in $\Delta:=\{\zeta\in\mathbb{\widehat{C}}:|\zeta|>1\}$ and associated with the class $\mathcal{U}(\lambda)$. For the class $\mathcal{M}_{0}(\lambda)$, we obtain a sufficient condition for a function $g$ to be an extreme point of the class $\mathcal{M}_{0}(\lambda)$.

\section{Properties of functions in $\mathcal{U}(\lambda)$}\label{Main Results-1}

We need the following results to prove our main results.

\begin{prop}\cite{Muhanna-Hallenbeck-1992}\label{p9-proposition-001}
Let $F$ be an analytic and univalent function in the unit disk $\mathbb{D}$. If $F(z)\ne0$ in $\mathbb{D}$ and $\mathbb{C}\setminus F(\mathbb{D})$ is convex domain, then any analytic function $f$ in $s(F^n):=\{g: g\prec F^n\}$, $n=1,2,\ldots$, can be expressed as
$$
f(z)=\int_{|x|=1} F^n(xz)\,d\mu(x),
$$
for some probability measure $\mu$ on the unit circle $|x|=1$.
\end{prop}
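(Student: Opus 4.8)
The plan is to identify the set of functions of the stated form as a closed convex hull and then to prove that $s(F^n)$ is contained in it. Set $\mathcal{K}_n:=\{\,z\mapsto F^n(xz):|x|=1\,\}$; since $F^n$ is analytic in $\mathbb{D}$, the map $x\mapsto F^n(x\,\cdot\,)$ is continuous from the unit circle into $\mathcal{H}$, so $\mathcal{K}_n$ is compact in $\mathcal{H}$. By the standard barycentre description of the closed convex hull of a compact subset of a locally convex space (see \cite{Hallenbeck-MacGregor-1984}), $\overline{co}\,\mathcal{K}_n$ is exactly the set of functions $z\mapsto\int_{|x|=1}F^n(xz)\,d\mu(x)$ with $\mu$ a probability measure on $|x|=1$. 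Thus it suffices to prove
\[
s(F^n)\subseteq\overline{co}\,\mathcal{K}_n .
\]

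To this end I first reformulate $s(F^n)$. If $g\prec F^n$, choose a Schwarz function $\omega$ (analytic, $\omega(\mathbb{D})\subseteq\mathbb{D}$, $\omega(0)=0$) with $g=F^n\circ\omega$; then $F\circ\omega$ is a single-valued analytic function with $(F\circ\omega)^n=g$, and conversely every such function is subordinate to $F^n$. Writing $\mathcal{B}$ for the family of Schwarz functions fixing the origin, we get $s(F^n)=\{(F\circ\omega)^n:\omega\in\mathcal{B}\}$, and since $|(F\circ\omega)(z)|\le\max_{|\zeta|\le|z|}|F(\zeta)|$ by Schwarz's lemma, $s(F^n)$ is locally uniformly bounded. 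By the Hahn--Banach separation theorem the displayed inclusion is equivalent to the statement that, for every continuous linear functional $L$ on $\mathcal{H}$,
\[
\sup_{\omega\in\mathcal{B}}\operatorname{Re}L\big((F\circ\omega)^n\big)\ \le\ \max_{|x|=1}\operatorname{Re}L\big(F^n(x\,\cdot\,)\big);
\]
the reverse inequality is trivial because every $F^n(x\,\cdot\,)$ lies in $s(F^n)$.

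Everything thus reduces to this linear extremal problem over $\mathcal{B}$, and it is here that the hypothesis that $\mathbb{C}\setminus F(\mathbb{D})$ is convex must enter. A normal-families argument shows that the supremum on the left is attained at some $\omega_0\in\mathcal{B}$: the family $\mathcal{B}$ is compact in $\mathcal{H}$ (a locally uniform limit of Schwarz functions fixing $0$ is again one, being either nonconstant and so mapping $\mathbb{D}$ into $\mathbb{D}$ by the maximum principle, or identically $0$), and $\omega\mapsto\operatorname{Re}L\big((F\circ\omega)^n\big)$ is continuous on $\mathcal{B}$. One then shows that the extremal value is already attained by a rotation $\omega_0(z)=xz$ --- equivalently, that the extreme points of $\overline{co}\,s(F^n)$ lie among the functions $F^n(x\,\cdot\,)$, $|x|=1$ --- which gives the required inequality. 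I expect this last step to be the main obstacle: since the conclusion fails without suitable hypotheses on $F$ (already for $F(z)=z^2$, where $z^4\prec z^2$ but $z^4\notin\overline{co}\{(xz)^2\}$), the convexity of the omitted set $\mathbb{C}\setminus F(\mathbb{D})$ has to be converted into a genuine estimate, namely a variational argument ruling out that an extremal $\omega_0$ carries a Blaschke factor of degree $\ge 2$. A more constructive alternative is to bypass linear functionals altogether: writing $g=F^n\circ\omega$, one assembles probability measures $\mu_r$ on the circle from the boundary behaviour of $\omega$ on $|z|=r$, and passes to a weak-$*$ limit as $r\to1^-$, using convexity of $\mathbb{C}\setminus F(\mathbb{D})$ to identify the limit with the asserted integral representation.
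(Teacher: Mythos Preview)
The paper does not supply a proof of this proposition; it is quoted verbatim from \cite{Muhanna-Hallenbeck-1992} and used as a black box in the proof of Theorem~\ref{p9-theorem-001}. So there is no ``paper's own proof'' to compare your argument against.

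That said, your proposal as it stands has a genuine gap. The reduction you outline --- identify the set of integral representations with $\overline{co}\,\mathcal{K}_n$, then separate by Hahn--Banach and reduce to a linear extremal problem over the Schwarz class $\mathcal{B}$ --- is a sound framework, but the entire substance of the proposition is concentrated in the step you yourself flag as ``the main obstacle'': proving that for every continuous linear functional $L$ on $\mathcal{H}$ the maximum of $\omega\mapsto\operatorname{Re}L\big((F\circ\omega)^n\big)$ over $\mathcal{B}$ is already attained at a rotation $\omega(z)=xz$. You do not carry this out; you only assert that it should follow from a ``variational argument ruling out that an extremal $\omega_0$ carries a Blaschke factor of degree $\ge 2$'', or alternatively from a weak-$*$ limit of measures built from the boundary behaviour of $\omega$, without supplying either argument or even indicating how the convexity of $\mathbb{C}\setminus F(\mathbb{D})$ enters. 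Until that step is actually executed, what you have is a reformulation of the problem rather than a proof. (A minor side remark: your illustrative example $F(z)=z^2$ violates the univalence hypothesis of the proposition, so it does not isolate the role of the convexity assumption on the complement; a pertinent example would need $F$ univalent with $\mathbb{C}\setminus F(\mathbb{D})$ non-convex.)
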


\begin{prop}\cite[Theorem 5.8]{Hallenbeck-MacGregor-1984} \label{p9-proposition-005}
The set $\overline{co}\,\mathcal{S}^*$ consists of all functions represented by
$$
f(z)=\int_{|x|=1} \frac{z}{(1-xz)^2}\,d\mu(x),
$$
where $\mu$ varies over the set of all probability measures on the unit circle $|x|=1$. Also,
$$
E(\overline{co}\,\mathcal{S}^*)=\left\{\frac{z}{(1-xz)^2}:|x|=1\right\}.
$$
\end{prop}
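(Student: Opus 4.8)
The plan is to introduce the rotated Koebe functions $K_x(z):=z/(1-xz)^2$ for $|x|=1$, and to set $\mathcal{K}:=\{K_x:|x|=1\}$ and $\mathcal{F}:=\{\int_{|x|=1}K_x(z)\,d\mu(x):\mu\in\mathcal{P}\}$, where $\mathcal{P}$ denotes the set of probability measures on $|x|=1$; the goal is then $\overline{co}\,\mathcal{S}^*=\mathcal{F}$ together with $E(\mathcal{F})=\mathcal{K}$. First I would check that $\mathcal{F}$ is convex and compact in $\mathcal{H}$: convexity is immediate by averaging representing measures, and compactness follows from the weak$^*$ compactness of $\mathcal{P}$ together with the continuity of $\mu\mapsto\int K_x\,d\mu(x)$ into $\mathcal{H}$ (dominated convergence on each disk $|z|\le r<1$). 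The inclusion $\mathcal{F}\subseteq\overline{co}\,\mathcal{S}^*$ is then automatic, since each $K_x$ is a rotation of the Koebe function and hence starlike, while $\int K_x\,d\mu(x)$ is a locally uniform limit of Riemann sums $\sum_j\mu(E_j)K_{x_j}$, i.e.\ of convex combinations of members of $\mathcal{S}^*$.

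The core step is $\mathcal{S}^*\subseteq\mathcal{F}$; because $\mathcal{F}$ is closed and convex, this yields $\overline{co}\,\mathcal{S}^*\subseteq\mathcal{F}$. For $f\in\mathcal{S}^*$ the Herglotz representation of $zf'(z)/f(z)$, after integration, gives
\[
\log\frac{f(z)}{z}=\int_{|x|=1}\log\frac{1}{(1-xz)^2}\,d\mu(x)
\]
for some $\mu\in\mathcal{P}$. The function $h(z):=\log\frac{1}{(1-z)^2}=-2\log(1-z)$ is convex univalent (indeed $1+zh''(z)/h'(z)=1/(1-z)$ has positive real part), so for each fixed $z\in\mathbb{D}$ the points $h(xz)$, $|x|=1$, lie in the convex domain $h(\mathbb{D})$ and hence so does their $\mu$-average; thus $\log(f(z)/z)\prec h(z)$, and since $1/(1-z)^2$ is univalent in $\mathbb{D}$ this is equivalent to $f(z)/z\prec 1/(1-z)^2$. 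Now Proposition~\ref{p9-proposition-001}, applied with the univalent non-vanishing function $F(z)=1/(1-z)$ — for which $\mathbb{C}\setminus F(\mathbb{D})=\{\operatorname{Re}w\le 1/2\}$ is convex — and with $n=2$, shows $f(z)/z=\int_{|x|=1}\frac{d\mu(x)}{(1-xz)^2}$, that is, $f\in\mathcal{F}$.

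It remains to identify $E(\mathcal{F})$. For $\mathcal{K}\subseteq E(\mathcal{F})$: if $K_x=t g_1+(1-t)g_2$ with $g_i=\int K_y\,d\mu_i(y)\in\mathcal{F}$ and $0<t<1$, comparing the coefficients of $z^2$ gives $x=t\int y\,d\mu_1(y)+(1-t)\int y\,d\mu_2(y)$ with $\bigl|\int y\,d\mu_i(y)\bigr|\le 1$ and $|x|=1$, which forces $\int y\,d\mu_i(y)=x$; then $\int\operatorname{Re}(\bar x y)\,d\mu_i(y)=1$ together with $\operatorname{Re}(\bar x y)\le 1$ on $|y|=1$ forces $\mu_i=\delta_x$, so $g_i=K_x$. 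For $E(\mathcal{F})\subseteq\mathcal{K}$: the map $\mu\mapsto\int K_y\,d\mu(y)$ is injective on $\mathcal{P}$ (distinct measures have distinct moments $\int y^{n}\,d\mu(y)$, hence distinct Taylor coefficients), so if $f=\int K_y\,d\mu(y)$ with $\mu$ not a point mass, writing $\mu=t\mu_1+(1-t)\mu_2$ with $\mu_1\ne\mu_2$ exhibits $f$ as a proper convex combination of two distinct members of $\mathcal{F}$; thus every extreme point arises from a point mass and so lies in $\mathcal{K}$.

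The main obstacle is the second paragraph: converting the additive (logarithmic) Herglotz representation of a starlike function into a representation of $f$ itself. The device ``subordinate to a convex function, then invoke Proposition~\ref{p9-proposition-001}'' is precisely what bridges this gap; the compactness and continuity bookkeeping of the first paragraph and the moment-uniqueness of the last are routine.
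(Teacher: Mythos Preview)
The paper does not supply its own proof of this proposition; it is quoted verbatim from \cite[Theorem~5.8]{Hallenbeck-MacGregor-1984} and used as an external input. Your proposal is a correct, self-contained derivation. One cosmetic point: in the second paragraph you reuse the symbol $\mu$ for both the Herglotz measure and the (a priori different) measure produced by Proposition~\ref{p9-proposition-001}; this is harmless since you only need the existence of some representing probability measure, but distinguishing the two would be cleaner.

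It is worth remarking that your bridging device for $\mathcal{S}^*\subseteq\mathcal{F}$---first obtaining the subordination $f(z)/z\prec 1/(1-z)^2$ and then invoking Proposition~\ref{p9-proposition-001}---is exactly the mechanism the paper deploys in the proof of Theorem~\ref{p9-theorem-001} to place $\mathcal{U}$ inside $\overline{co}\,\mathcal{S}^*$ (there with $F(z)=(1-z)^{-2}$ and $n=1$; your choice $F(z)=(1-z)^{-1}$, $n=2$ works equally well). So although the paper does not re-prove Proposition~\ref{p9-proposition-005}, your argument is very much in its spirit, and in effect shows that Proposition~\ref{p9-proposition-005} already follows from Proposition~\ref{p9-proposition-001} together with the classical Herglotz representation for starlike functions, without needing a separate appeal to \cite{Hallenbeck-MacGregor-1984}.
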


Using Propositions \ref{p9-proposition-001} and \ref{p9-proposition-005}, we prove the following integral representation for functions in $\mathcal{U}$.

\begin{thm}\label{p9-theorem-001}
Each function $f\in\mathcal{U}$ has an integral representation of the form
\begin{equation}\label{p9-015}
f(z)=\int_{|x|=1} \frac{z}{(1-xz)^2}\,d\mu(x)
\end{equation}
for some probability measure $\mu$ on the unit circle $|x|=1$. Moreover, $\mathcal{U}\subsetneq\overline{co}\,\mathcal{S}^*$.
\end{thm}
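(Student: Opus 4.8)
The plan is to exploit the subordination result \eqref{p9-010} in the case $\lambda=1$, which says that every $f\in\mathcal{U}$ satisfies $f(z)/z\prec k(z)/z$ where $k(z)=z/(1-z)^2$ is the Koebe function. Writing $F(z):=k(z)/z=1/(1-z)^2$, the strategy is to apply Proposition \ref{p9-proposition-001} with $n=1$ to the function $g(z):=f(z)/z$, since $g\prec F$ means $g\in s(F^1)$. To do this I must verify the hypotheses of Proposition \ref{p9-proposition-001} for $F$: namely that $F$ is analytic and univalent on $\mathbb{D}$, that $F(z)\neq 0$ on $\mathbb{D}$, and that $\mathbb{C}\setminus F(\mathbb{D})$ is a convex domain. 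Univalence and non-vanishing of $F(z)=1/(1-z)^2$ are clear. The image $F(\mathbb{D})$ is the image of the disk under $w\mapsto 1/(1-w)^2$; since $1-w$ maps $\mathbb{D}$ onto the disk $|w-1|<1$, and $w\mapsto w^2$ maps that disk onto the complement of the slit $(-\infty,0]$ suitably scaled, and inversion is a Möbius map, one checks that $\mathbb{C}\setminus F(\mathbb{D})$ is the half-plane $\{\,w:\operatorname{Re} w\le 1/4\,\}$, which is convex. (This is just the classical fact that the Koebe function maps $\mathbb{D}$ onto $\mathbb{C}$ minus the slit $(-\infty,-1/4]$; dividing by $z$ shifts and scales this into the stated half-plane.)

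Once the hypotheses are checked, Proposition \ref{p9-proposition-001} gives a probability measure $\mu$ on $|x|=1$ with
\[
\frac{f(z)}{z}=\int_{|x|=1} F(xz)\,d\mu(x)=\int_{|x|=1}\frac{1}{(1-xz)^2}\,d\mu(x),
\]
and multiplying through by $z$ yields exactly the representation \eqref{p9-015}. This proves the first assertion of the theorem. For the inclusion $\mathcal{U}\subseteq\overline{co}\,\mathcal{S}^*$, I then invoke Proposition \ref{p9-proposition-005}, which identifies $\overline{co}\,\mathcal{S}^*$ precisely as the set of functions admitting a representation of the form \eqref{p9-015}; hence every $f\in\mathcal{U}$ lies in $\overline{co}\,\mathcal{S}^*$.

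It remains to argue that the inclusion is strict, i.e. $\mathcal{U}\ne\overline{co}\,\mathcal{S}^*$. For this I would recall the fact, cited in the introduction (see \cite{Fournier-Ponnusamy-2007,Obradovic-Ponnusamy-2001}), that $\mathcal{S}^*$ is not contained in $\mathcal{U}$: there exists a starlike function $h\in\mathcal{S}^*\setminus\mathcal{U}$. Since $\mathcal{S}^*\subseteq\overline{co}\,\mathcal{S}^*$, such an $h$ lies in $\overline{co}\,\mathcal{S}^*$ but not in $\mathcal{U}$, which gives strictness immediately. The main obstacle is really the bookkeeping in the second paragraph — confirming that $\mathbb{C}\setminus F(\mathbb{D})$ is genuinely a (convex) half-plane and not merely a slit complement — so that Proposition \ref{p9-proposition-001} applies verbatim; everything after that is a direct concatenation of the two quoted propositions together with \eqref{p9-010}.
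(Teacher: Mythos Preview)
Your approach matches the paper's: use the subordination $f(z)/z\prec(1-z)^{-2}$ from \eqref{p9-010}, verify the hypotheses of Proposition~\ref{p9-proposition-001} for $F(z)=(1-z)^{-2}$, deduce \eqref{p9-015}, and then read off the inclusion $\mathcal{U}\subseteq\overline{co}\,\mathcal{S}^*$ via Proposition~\ref{p9-proposition-005}.

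One correction is needed, however. The set $\mathbb{C}\setminus F(\mathbb{D})$ is \emph{not} the half-plane $\{\,w:\operatorname{Re} w\le 1/4\,\}$; division by $z$ is not an affine operation, so your heuristic ``dividing by $z$ shifts and scales'' is invalid. Writing $F(e^{i\theta})=u+iv$, a direct computation (as the paper does) gives the boundary curve $v^2=\tfrac14-u$, a parabola with vertex at $\tfrac14$, and $\mathbb{C}\setminus F(\mathbb{D})$ is the convex region $\{\,u+iv:v^2\le\tfrac14-u\,\}$. For instance $F(i)=i/2$ lies on this parabola but not on the line $\operatorname{Re} w=1/4$. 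Since convexity of the complement is all that Proposition~\ref{p9-proposition-001} demands, your argument survives once this description is corrected. Your strictness argument, exhibiting $h\in\mathcal{S}^*\setminus\mathcal{U}\subseteq\overline{co}\,\mathcal{S}^*\setminus\mathcal{U}$, is clean and correct.
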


\begin{proof}
If $f\in\mathcal{U}$ then (\ref{p9-010}) holds, i.e., $f(z)/z\prec k(z)/z=(1-z)^{-2}$. We note that the function $k(z)/z=(1-z)^{-2}$ is analytic and univalent in $\mathbb{D}$ and maps the unit disk $\mathbb{D}$ onto a domain $\Omega$ whose complement is convex. Indeed, the function $k(z)/z$ maps the unit  circle $\partial\mathbb{D}$ onto a hyperbola given by the equation $v^2=-(u-1/4)$. In view of Proposition \ref{p9-proposition-001}, $f(z)/z$ can be expressed as
$$
\frac{f(z)}{z}=\int_{|x|=1} \frac{k(xz)}{xz}\,d\mu(x),
$$
which is equivalent to (\ref{p9-015}). From the representation (\ref{p9-015}) and Proposition \ref{p9-proposition-005} it follows that $\mathcal{U}\subsetneq\overline{co}\,\mathcal{S}^*$. The proper inclusion follows from the fact that $\mathcal{U}$ is not included in $\mathcal{S}^*$ and $\mathcal{S}^*\subsetneq\overline{co}\,\mathcal{S}^*$.
\end{proof}

\begin{rem}
It follows from Theorem \ref{p9-theorem-001} that every function $f\in\mathcal{U}$ has the integral representation of the form (\ref{p9-015}). But unfortunately, we are unable to conclude similar integral representation for functions in $\mathcal{U}(\lambda)$, $0<\lambda<1$, using the subordination relation $f(z)/z\prec k_{\lambda}(z)/z$, because the function $\phi_{\lambda}(z):=k_{\lambda}(z)/z$, $0<\lambda<1$, maps $\mathbb{D}$ onto a domain $\phi_{\lambda}(\mathbb{D})$ whose complement is not convex. Indeed, if we let $\phi_{\lambda}(e^{i\theta})=u(\theta)+iv(\theta)$, then a simple computation shows that
\begin{equation}\label{p9-018}
u(\theta)=\frac{1-\lambda-2\lambda\cos\theta}{2(1+\lambda^2-2\lambda\cos\theta)}\quad\mbox{ and }\quad
v(\theta)= \frac{(1+\lambda-2\lambda\cos\theta)\cot(\theta/2)}{2(1+\lambda^2-2\lambda\cos\theta)},
\end{equation}
and so, the equation of the curve $\phi_{\lambda}(e^{i\theta})$, $0\le\theta<2\pi$, is given by
\begin{equation}\label{p9-020}
v^2=-\frac{(1+(\lambda-1)u)^2(2(\lambda+1)u-1)}{(\lambda+1)(2(\lambda-1)^2u+3\lambda-1)}.
\end{equation}
From (\ref{p9-018}), we note that for $0<\lambda<1$, $|v|\rightarrow +\infty$ and $u\rightarrow (1-3\lambda)/(2(1-\lambda)^2)$ as $\theta\rightarrow 0$, and so $u= (1-3\lambda)/(2(1-\lambda)^2)$ is an  vertical asymptote of the curve (\ref{p9-020}). Hence, $\mathbb{C}\setminus\phi_{\lambda}(\mathbb{D})$ is not convex.
\end{rem}

%

It is important to note that by using the integral representation (\ref{p9-015}) for functions in $\mathcal{U}$, one can easily obtain the following coefficient estimates for functions in $\mathcal{U}$.

\begin{cor}\label{p9-corollary-005}
If $f\in\mathcal{U}$ is of the form (\ref{p9-001}) then $|a_n|\le n$. Equality holds if, and only if, $f$ is a rotation of the Koebe function.
\end{cor}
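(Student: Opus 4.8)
The plan is to read the coefficients directly off the integral representation \eqref{p9-015} furnished by Theorem~\ref{p9-theorem-001}. Expanding the Koebe kernel gives the power series $\frac{z}{(1-xz)^2}=\sum_{n=1}^{\infty}n\,x^{n-1}z^n$, which converges uniformly on each disk $|z|\le r<1$ (uniformly in $x$ with $|x|=1$), so I may substitute it into \eqref{p9-015} and interchange summation and integration against the finite measure $\mu$. Matching Taylor coefficients with \eqref{p9-001} yields
$$
a_n=n\int_{|x|=1}x^{n-1}\,d\mu(x),\qquad n\ge 2 .
$$
Since $\mu$ is a probability measure and $|x^{n-1}|=1$ on $|x|=1$, the triangle inequality for integrals gives $|a_n|\le n\int_{|x|=1}1\,d\mu(x)=n$.

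For the equality statement, the forward implication is trivial: if $f(z)=z/(1-x_0z)^2$ with $|x_0|=1$ (that is, $\mu=\delta_{x_0}$), then $a_n=n\,x_0^{\,n-1}$, so $|a_n|=n$. Conversely, assume $|a_n|=n$ for some fixed $n$. Then
$$
\Bigl|\int_{|x|=1}x^{n-1}\,d\mu(x)\Bigr|=1=\int_{|x|=1}|x^{n-1}|\,d\mu(x),
$$
i.e.\ equality holds in the triangle inequality for integrals; since the integrand has constant modulus $1$, this forces $x^{n-1}$ to be $\mu$-almost everywhere equal to a fixed unimodular constant $e^{i\alpha}$. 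Hence $\operatorname{supp}\mu$ is contained in the finite set of $(n-1)$-th roots of $e^{i\alpha}$. When $n=2$ this set is a single point $x_0$, so $\mu=\delta_{x_0}$ and \eqref{p9-015} gives $f(z)=z/(1-x_0z)^2$, a rotation of the Koebe function.

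The only point needing more than a line is the equality case for $n\ge 3$, where a priori $\mu$ could be a non-degenerate convex combination of point masses at the $n-1$ roots of $e^{i\alpha}$; note that any such combination automatically satisfies $|a_n|=n$, so this really must be excluded. I would close this gap in one of two ways. The quickest is to use $\mathcal{U}\subsetneq\mathcal{S}$ (Aksentev~\cite{Aksentev-1958}) together with the classical rigidity in the coefficient theorem for $\mathcal{S}$: if $f\in\mathcal{S}$ and $|a_n|=n$ for some $n\ge 2$, then $f$ is a rotation of the Koebe function (see \cite{Duren-book}). Alternatively, staying within the present framework, one checks that a genuine convex combination $\sum_j c_j\,z/(1-x_jz)^2$ with at least two distinct $x_j$ cannot satisfy the defining inequality $|f'(z)(z/f(z))^2-1|<1$ of $\mathcal{U}$ (for instance by analysing $z/f(z)$, which is then a rational function that is not a square, forcing $f'(z)(z/f(z))^2-1$ to fail to stay in the unit disk). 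I expect this verification --- ruling out nontrivial convex combinations of rotated Koebe functions from $\mathcal{U}$ --- to be the sole genuine obstacle; the bound $|a_n|\le n$ and the $n=2$ equality case are immediate from \eqref{p9-015}.
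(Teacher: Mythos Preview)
Your approach matches the paper's exactly: the paper gives no proof for this corollary at all, merely asserting that the integral representation \eqref{p9-015} yields the coefficient bounds ``easily.'' Your expansion $a_n=n\int_{|x|=1}x^{n-1}\,d\mu(x)$ and the ensuing triangle-inequality bound are precisely what the paper has in mind.

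You are in fact more careful than the paper on the equality case. The paper does not address the issue you raise for $n\ge 3$, where equality in the triangle inequality only pins $\operatorname{supp}\mu$ down to the $(n-1)$-th roots of a unimodular constant, and a nontrivial convex combination of rotated Koebe kernels is a genuine element of $\overline{co}\,\mathcal{S}^*$ with $|a_n|=n$. Your first proposed fix --- invoking $\mathcal{U}\subsetneq\mathcal{S}$ and the equality case of de~Branges' theorem --- is clean and correct, though it is a heavy hammer; note that it actually makes the integral representation superfluous for the entire corollary, since $\mathcal{U}\subsetneq\mathcal{S}$ alone already gives both $|a_n|\le n$ and the rigidity statement. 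Your second proposed route (showing directly that a proper convex combination of distinct Koebe rotations fails the defining inequality of $\mathcal{U}$) would keep the argument self-contained, but you have not carried it out, and it is not entirely routine; if you pursue it, the cleanest line is probably to observe that such a combination is not univalent (distinct Koebe rotations share the value $-1/4$ times a unimodular constant at interior points), contradicting $\mathcal{U}\subsetneq\mathcal{S}$ --- which again circles back to Aksent\'ev's inclusion.
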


If $\mathscr{G}$ is a convex subset of $\mathcal{H}$ and $J:\mathcal{H}\rightarrow \mathbb{R}$ then $J$ is called convex on $\mathscr{G}$ if $J(tf+(1-t)g)\le tJ(f)+(1-t)J(g)$ whenever $f,g\in\mathscr{G}$ and $0\le t\le 1$. We note that $\overline{co}\,\mathcal{S}^*$ is a compact subset of $\mathcal{H}$ and $E(\overline{co}\,\mathcal{S}^*)\subsetneq \mathcal{U}$. Hence, for any real-valued, continuous and convex functional $J$ on $\overline{co}\,\mathcal{S}^*$, by \cite[Theorems 4.5 and 4.6]{Hallenbeck-MacGregor-1984}, we have
$$
\max_{f\in\mathcal{U}} J(f)\le \max_{f\in\overline{co}\,\mathcal{S}^*} J(f)=\max_{f\in E(\overline{co}\,\mathcal{S}^*)}J(f)\le \max_{f\in\mathcal{U}} J(f),
$$
and therefore,
\begin{equation}
\max_{f\in\mathcal{U}} J(f)= \max_{f\in E(\overline{co}\,\mathcal{S}^*)}J(f).
\end{equation}

Another problem which has an independent interest in the theory of univalent functions is the estimation of the $L^p$ mean for certain classes of analytic functions. Corresponding to each analytic function $f$ in $\mathbb{D}$, we let
$$
J(f)=\frac{1}{2\pi}\int_{0}^{2\pi} |f^{(n)}(re^{i\theta})|^p\,d\theta,
$$
where $0<r<1$, $p>0$ and $n=0,1,2,\ldots$. It is our aim to maximize the functional $J(f)$ over the class $\mathcal{U}$. In general, it is more convenient to consider the functional $\|f\|=[J(f)]^{1/p}$. In particular, if $p\ge 1$ then $\|tf+(1-t)g\|\le t\|f\|+(1-t)\|g\|$ because of Minkowski's inequality. In other words, if $p\ge 1$ then $\|f\|$ is a convex functional. The above argument is due to MacGregor \cite{MacGregor-1972}.

\begin{thm}\label{p9-theorem-010}
If $f\in \mathcal{U}$ and $k(z)= z/(1-z)^2$ then
\begin{equation}\label{p9-025}
\frac{1}{2\pi}\int_{0}^{2\pi} |f^{(n)}(re^{i\theta})|^p\,d\theta \le \frac{1}{2\pi}\int_{0}^{2\pi} |k^{(n)}(re^{i\theta})|^p\,d\theta,
\end{equation}
whenever $0<r<1$, $p\ge 1$ and $n=0,1,2,\ldots$. Moreover, for $n=0$, the inequality (\ref{p9-025}) holds for any real number $p$.
\end{thm}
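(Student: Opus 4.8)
The plan is to deduce Theorem~\ref{p9-theorem-010} from Theorem~\ref{p9-theorem-001} together with the subordination theory of integral means. The key input is Littlewood's subordination theorem: if $g \prec h$ in $\mathbb{D}$, then $\int_0^{2\pi} |g(re^{i\theta})|^p\,d\theta \le \int_0^{2\pi} |h(re^{i\theta})|^p\,d\theta$ for all $0<r<1$ and all $p>0$. So the crux is to establish an appropriate subordination for $f^{(n)}$ when $f \in \mathcal{U}$.

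First I would handle the case $n=0$, which is immediate: by Obradovi\'c's result (\ref{p9-010}) we have $f(z)/z \prec k(z)/z$, and since multiplication by $z$ does not change the modulus on the circle $|z|=r$, $|f(re^{i\theta})| = r\,|(f/z)(re^{i\theta})|$ and similarly for $k$, so Littlewood's theorem applied to $f/z \prec k/z$ gives (\ref{p9-025}) for $n=0$ and \emph{all} real $p$ (for $p<0$ one still needs the functions to be nonvanishing, which holds since $f \in \mathcal{U}$ is nonvanishing in $\mathbb{D}\setminus\{0\}$, and a limiting argument near $z=0$ takes care of that point; alternatively one invokes the standard extension of Littlewood's inequality to subordinations between nonvanishing functions valid for all real $p$). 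This gives the last sentence of the theorem.

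For $n \ge 1$ the subordination $f \prec k$ is \emph{not} available (indeed $\mathcal{U} \not\subset \mathcal{S}^*$ and rotations of $k$ need not dominate $f$ pointwise in the subordination sense), so instead I would use the integral representation from Theorem~\ref{p9-theorem-001}: $f(z) = \int_{|x|=1} z(1-xz)^{-2}\,d\mu(x)$. Differentiating $n$ times under the integral sign,
\[
f^{(n)}(z) = \int_{|x|=1} \frac{\partial^n}{\partial z^n}\!\left[\frac{z}{(1-xz)^2}\right]d\mu(x)
= \int_{|x|=1} x^{n}\, k^{(n)}(xz)\,d\mu(x),
\]
using $\frac{d^n}{dw^n}k(w)\big|_{w=xz}\cdot x^n = \frac{\partial^n}{\partial z^n}k(xz)$ and $k(xz)/x = $ the $x$-rotation structure; more precisely $\frac{z}{(1-xz)^2} = \frac{1}{x}k(xz)$, so $f^{(n)}(z) = \int x^{n-1} k^{(n)}(xz)\,d\mu(x)$. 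Then for $p \ge 1$, Minkowski's integral inequality gives
\[
\left(\frac{1}{2\pi}\int_0^{2\pi}|f^{(n)}(re^{i\theta})|^p\,d\theta\right)^{1/p}
\le \int_{|x|=1}\left(\frac{1}{2\pi}\int_0^{2\pi}|x^{n-1}k^{(n)}(xre^{i\theta})|^p\,d\theta\right)^{1/p} d\mu(x).
\]
Since $|x|=1$, the factor $|x^{n-1}|=1$ drops out, and a rotation $\theta \mapsto \theta + \arg x$ in the inner integral shows it equals $\frac{1}{2\pi}\int_0^{2\pi}|k^{(n)}(re^{i\theta})|^p\,d\theta$, independent of $x$. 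Integrating the constant against the probability measure $\mu$ yields exactly the right-hand side of (\ref{p9-025}), after raising to the $p$-th power.

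The main obstacle—really the only nontrivial point—is justifying differentiation under the integral sign and the application of Minkowski's integral inequality in this setting; this is routine since for fixed $r<1$ the integrand $x \mapsto \frac{\partial^n}{\partial z^n}\big(z(1-xz)^{-2}\big)$ is uniformly bounded for $|x|=1$, $|z|\le r$, so dominated convergence and Fubini apply without difficulty. One should also note explicitly that equality in (\ref{p9-025}) for $p=1$, $n=0$ (and more generally) is attained by the Koebe function itself, corresponding to $\mu$ a point mass, which is consistent with Corollary~\ref{p9-corollary-005}.
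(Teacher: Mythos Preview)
Your proof is correct and essentially the same as the paper's. For $p\ge 1$ and $n\ge 0$ the paper invokes the abstract extreme-point framework: since $\|f\|=[J(f)]^{1/p}$ is convex by Minkowski's inequality, it suffices to verify the estimate on $E(\overline{co}\,\mathcal{S}^*)=\{z/(1-xz)^2:|x|=1\}$, and there the rotation argument gives equality with $k$. You unpack this by applying Minkowski's \emph{integral} inequality directly to the representation $f^{(n)}(z)=\int x^{n-1}k^{(n)}(xz)\,d\mu(x)$, which is exactly the same convexity argument made concrete. For $n=0$ and general $p\in\mathbb{R}$ the paper handles $p<0$ by noting that the subordination $f/z\prec k/z$ implies $z/f\prec z/k$ (both sides nonvanishing), so Littlewood's theorem with exponent $-p>0$ applied to $z/f$ gives the result; your ``standard extension to nonvanishing functions'' is precisely this, though stating it via the reciprocal subordination is cleaner than the limiting argument you first suggest.
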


\begin{proof}
For $p\ge 1$, as mentioned above, it is sufficient to consider functions of the form
$$
f(z)=\frac{z}{(1-xz)^2},\quad |x|=1.
$$
For these functions $f(z)=x^{-1}k(xz)$ and hence $f^{(n)}(z)=x^{n-1}k^{(n)}(xz)$. Thus
\begin{align*}
\frac{1}{2\pi}\int_{0}^{2\pi} |f^{(n)}(re^{i\theta})|^p\,d\theta
&= \frac{1}{2\pi}\int_{0}^{2\pi} |k^{(n)}(xre^{i\theta})|^p\,d\theta\\[2mm]
&= \frac{1}{2\pi}\int_{0}^{2\pi} |k^{(n)}(re^{i\theta})|^p\,d\theta.
\end{align*}
Hence (\ref{p9-025}) holds for any $f\in \mathcal{U}$.

Moreover, if $f\in \mathcal{U}$ then, as mentioned above, $f(z)/z\prec k(z)/z$ and also $z/f(z)\prec z/k(z)$. Therefore, for $n=0$, the inequality (\ref{p9-025}) follows from Littlewood's subordination theorem (see \cite[Theorem 6.1]{Duren-book}) for every real $p$.
\end{proof}

For $f\in\mathcal{H}$ and $0<r<1$, let
$$
L(r,f)=\int_{0}^{2\pi} r|f'(re^{i\theta})|\,d\theta
$$
denote the arclength of the image of the circle $|z|=r$ under $f$. Substituting $n=1$ in Theorem \ref{p9-theorem-010}, we obtain the following sharp estimate of $L(r,f)$ for functions $f$ in $\mathcal{U}$.

\begin{cor}
If $f\in \mathcal{U}$ then $L(r,f)\le L(r,k)$, where $k$ is the Koebe function.
\end{cor}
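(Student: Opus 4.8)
The plan is to obtain this corollary directly from Theorem \ref{p9-theorem-010} by specializing the exponent and the order of differentiation. Recall that for $f\in\mathcal{H}$ and $0<r<1$ the arclength of the image curve $f(\{|z|=r\})$ is
$$
L(r,f)=\int_{0}^{2\pi} r\,|f'(re^{i\theta})|\,d\theta = r\cdot\frac{1}{2\pi}\int_{0}^{2\pi}|f'(re^{i\theta})|^p\,d\theta\cdot 2\pi
$$
in the case $p=1$, $n=1$; that is, $L(r,f)=2\pi r\left(\frac{1}{2\pi}\int_0^{2\pi}|f'(re^{i\theta})|\,d\theta\right)$, so $L(r,f)$ is (up to the fixed positive factor $2\pi r$) precisely the functional appearing on the left-hand side of \eqref{p9-025} with $n=1$ and $p=1$.

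First I would note that $p=1\ge 1$ and $n=1$ are admissible choices in the hypothesis of Theorem \ref{p9-theorem-010}, so for any $f\in\mathcal{U}$ the inequality \eqref{p9-025} reads
$$
\frac{1}{2\pi}\int_{0}^{2\pi}|f'(re^{i\theta})|\,d\theta \le \frac{1}{2\pi}\int_{0}^{2\pi}|k'(re^{i\theta})|\,d\theta
$$
for every $0<r<1$. Multiplying both sides by the positive constant $2\pi r$ gives exactly
$$
L(r,f)=\int_{0}^{2\pi} r\,|f'(re^{i\theta})|\,d\theta \le \int_{0}^{2\pi} r\,|k'(re^{i\theta})|\,d\theta = L(r,k),
$$
which is the claimed estimate. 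Since $k$ itself lies in $\mathcal{U}$ (it corresponds to the choice $x=1$ in the extremal family $z/(1-xz)^2\subset E(\overline{co}\,\mathcal{S}^*)\subsetneq\mathcal{U}$), the bound $L(r,k)$ is attained, so the inequality is sharp.

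There is no substantive obstacle here; the only point requiring a word of care is to make sure the factor $r$ and the normalization $1/(2\pi)$ are handled consistently when passing between the functional in \eqref{p9-025} and the definition of $L(r,f)$, but both are strictly positive and cancel cleanly, so the implication is immediate. In writing this up I would simply state: \emph{substituting $n=1$ and $p=1$ in Theorem \ref{p9-theorem-010} and multiplying through by $r$ yields $L(r,f)\le L(r,k)$, with equality for $f=k$.}
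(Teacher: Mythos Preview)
Your proof is correct and follows exactly the paper's approach: the corollary is obtained by specializing Theorem \ref{p9-theorem-010} to $n=1$ (and $p=1$), and your added remark that $k\in\mathcal{U}$ justifies the sharpness claim.
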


We now review some of the standard facts on the theory of *-functions developed by Baernstein \cite{Baernstein-1974}. For more on *-functions we refer to Duren \cite{Duren-book}. For $g\in L^1[-\pi,\pi]$, the *-function of $g$ is defined by
$$
g^*(\theta)=\sup_{|E|=2\theta}\int_{E} g(x)\,dx, \quad 0\le\theta\le\pi,
$$
where $|E|$ denote the Lebesgue measure of the set $E$. Here the supremum is taken over all Lebesgue measurable subsets of $[-\pi,\pi]$ with $|E|= 2\theta$.

\begin{lem}\label{p9-lemma-001}
For $g,h\in L^1[-\pi,\pi]$, the following statements are equivalent.
\begin{enumerate}
\item[(a)] For every convex nondecreasing function $\Phi$ on $\mathbb{R}$,
$$
\int_{-\pi}^{\pi} \Phi(g(\theta))\,d\theta \le \int_{-\pi}^{\pi} \Phi(h(\theta))\,d\theta.
$$

\item[(b)] For every $t\in\mathbb{R}$,
$$
\int_{-\pi}^{\pi} [g(\theta)-t]^+\,d\theta \le \int_{-\pi}^{\pi} [h(\theta)-t]^+\,d\theta.
$$

\item[(c)] $g^*(\theta) \le h^*(\theta)$, $\quad 0\le\theta\le\pi$.

\end{enumerate}
Here $[g(\theta)-t]^+=\max\{g(\theta)-t,0\}$.
\end{lem}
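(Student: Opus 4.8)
The statement to prove is Lemma \ref{p9-lemma-001}, which is the fundamental equivalence in Baernstein's theory of $*$-functions. This is a classical result, so my plan is to reconstruct the standard chain of implications $(a)\Rightarrow(b)\Rightarrow(c)\Rightarrow(a)$, each step being a genuine but well-understood piece of real analysis.

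The implication $(a)\Rightarrow(b)$ is immediate: for each fixed $t\in\mathbb{R}$, the function $\Phi(x)=[x-t]^+=\max\{x-t,0\}$ is convex and nondecreasing on $\mathbb{R}$, so applying (a) with this particular $\Phi$ yields (b) directly. No work is needed beyond observing that $[x-t]^+$ lies in the class of test functions allowed in (a).

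For $(b)\Rightarrow(c)$, the key identity is that for any $g\in L^1[-\pi,\pi]$ and any measurable set $E$ with $|E|=2\theta$, one has the elementary pointwise bound $\int_E g(x)\,dx \le \int_E [g(x)-t]^+\,dx + t\cdot 2\theta \le \int_{-\pi}^{\pi}[g(x)-t]^+\,dx + 2\theta t$ for every real $t$; taking the supremum over such $E$ gives $g^*(\theta)\le \int_{-\pi}^{\pi}[g(x)-t]^+\,dx + 2\theta t$. Combining this with (b) applied to $h$ and then optimizing over $t$ — the infimum over $t$ of $\int_{-\pi}^{\pi}[h(x)-t]^+\,dx + 2\theta t$ is attained (by a layer-cake / distribution-function computation) precisely at $h^*(\theta)$, where the optimal $t$ is the value such that the super-level set $\{h>t\}$ has measure close to $2\theta$ — yields $g^*(\theta)\le h^*(\theta)$. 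I expect this to be the most delicate step: one must verify carefully that $\inf_{t\in\mathbb{R}}\bigl(\int_{-\pi}^{\pi}[h(x)-t]^+\,dx + 2\theta t\bigr) = h^*(\theta)$, which is where the decreasing rearrangement enters and where a careful treatment of the case when the distribution function has jumps is required.

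Finally, $(c)\Rightarrow(a)$ is obtained by writing a general convex nondecreasing $\Phi$ as a superposition of the elementary convex functions $x\mapsto[x-t]^+$: indeed $\Phi(x)=\Phi(-\pi\text{-essinf}) + \int_{\mathbb{R}}[x-t]^+\,d\nu(t) + c\,x$ for a suitable nonnegative measure $\nu$ and constant $c\ge 0$ coming from the (nonnegative, nondecreasing) right derivative of $\Phi$. Then $\int_{-\pi}^{\pi}\Phi(g(\theta))\,d\theta$ is an integral against $d\nu$ of the quantities $\int_{-\pi}^{\pi}[g(\theta)-t]^+\,d\theta$ plus a multiple of $\int_{-\pi}^{\pi}g$; since $(c)$ implies $(b)$ by the computation above (the functional $g\mapsto\int[g-t]^+$ and $g\mapsto\int g = g^*(\pi)$ are both controlled by $g^*$), Fubini's theorem propagates the inequality from the elementary pieces to $\Phi$. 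The main structural point throughout is that the $*$-function linearizes these convex functionals; once that is set up, all three implications follow from monotone/dominated convergence and Fubini. Since this lemma is quoted essentially verbatim from Baernstein \cite{Baernstein-1974} and Duren \cite{Duren-book}, it is also legitimate simply to cite those sources rather than reproduce the proof.
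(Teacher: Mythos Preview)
The paper does not prove this lemma at all: it is stated as one of the ``standard facts on the theory of $*$-functions developed by Baernstein \cite{Baernstein-1974}'' and the reader is referred to Duren \cite{Duren-book} for details. Your closing remark --- that it is legitimate simply to cite those sources --- is precisely the route the paper takes, so your proposal is consistent with the paper's treatment.

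Your sketched argument is the standard one and is essentially correct. One small wording issue: in your step $(c)\Rightarrow(a)$ you invoke ``$(c)$ implies $(b)$ by the computation above,'' but the computation you gave was for $(b)\Rightarrow(c)$. The converse direction $(c)\Rightarrow(b)$ needs the dual identity
\[
\int_{-\pi}^{\pi}[f(x)-t]^+\,dx \;=\; \sup_{0\le\theta\le\pi}\bigl(f^*(\theta)-2\theta t\bigr),
\]
which is the Legendre-dual of the infimum formula you used; with that in hand your superposition argument for $(c)\Rightarrow(a)$ goes through. This is a minor point in what is, in any case, a well-documented classical result.
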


\begin{lem}\cite[Lemma 2]{Leung-1979}\label{p9-lemma-005}
Let $u$ and $v$ be two subharmonic functions in $\mathbb{D}$ and there exists an analytic function $\omega: \mathbb{D} \rightarrow \mathbb{D}$ with $\omega(0)=0$ such that $u(z)= v(\omega(z))$ for $z\in\mathbb{D}$. Then for each $0<r<1$,
$$
u^*(re^{i\theta}) \le v^*(re^{i\theta}).
$$
\end{lem}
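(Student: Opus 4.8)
This is the $*$-function subordination lemma of Baernstein--Leung type, and the plan is to deduce it from Lemma~\ref{p9-lemma-001} together with the classical subordination inequality for circular means of subharmonic functions. Fix $r\in(0,1)$ and set $g(\theta)=u(re^{i\theta})$, $h(\theta)=v(re^{i\theta})$; since $u$ and $v$ are subharmonic and (without loss of generality) not identically $-\infty$, both $g$ and $h$ lie in $L^1[-\pi,\pi]$, and $u^*(re^{i\theta})$, $v^*(re^{i\theta})$ are by definition $g^*(\theta)$, $h^*(\theta)$. By the equivalence $(b)\Leftrightarrow(c)$ in Lemma~\ref{p9-lemma-001}, proving $u^*(re^{i\theta})\le v^*(re^{i\theta})$ for all $\theta$ amounts to proving
$$
\int_{-\pi}^{\pi}\bigl[u(re^{i\theta})-t\bigr]^+\,d\theta\le \int_{-\pi}^{\pi}\bigl[v(re^{i\theta})-t\bigr]^+\,d\theta\qquad\text{for every }t\in\mathbb{R}.
$$

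The crucial point is that the left-hand integrand factors through $\omega$: since $x\mapsto\max\{x,0\}$ is convex and nondecreasing, the function $w:=\max\{v-t,0\}$ is subharmonic in $\mathbb{D}$, and from $u=v\circ\omega$ we get $[u-t]^+=[v\circ\omega-t]^+=\bigl(\max\{v-t,0\}\bigr)\circ\omega=w\circ\omega$. Hence it suffices to establish, for an arbitrary subharmonic $w$ on $\mathbb{D}$, an arbitrary analytic $\omega:\mathbb{D}\to\mathbb{D}$ with $\omega(0)=0$, and $r\in(0,1)$, the circular mean inequality $\int_{-\pi}^{\pi}(w\circ\omega)(re^{i\theta})\,d\theta\le\int_{-\pi}^{\pi}w(re^{i\theta})\,d\theta$. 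To prove this, fix $R\in(r,1)$ and let $h_R$ be the Poisson integral of $w|_{\{|z|=R\}}$, so that $h_R$ is harmonic on $\{|z|<R\}$ and $w\le h_R$ there. By the Schwarz lemma $|\omega(z)|\le|z|$, so $\omega$ maps $\{|z|\le r\}$ into $\{|z|\le r\}\subset\{|z|<R\}$, whence $w\circ\omega\le h_R\circ\omega$ on $\{|z|=r\}$. Since $h_R$ is harmonic and $\omega$ holomorphic, $h_R\circ\omega$ is harmonic on $\{|z|<R\}$, so by the mean value property
$$
\frac{1}{2\pi}\int_{-\pi}^{\pi}(w\circ\omega)(re^{i\theta})\,d\theta\le\frac{1}{2\pi}\int_{-\pi}^{\pi}(h_R\circ\omega)(re^{i\theta})\,d\theta=h_R(\omega(0))=h_R(0)=\frac{1}{2\pi}\int_{-\pi}^{\pi}w(Re^{i\theta})\,d\theta.
$$
Letting $R\downarrow r$ and using that $R\mapsto\int_{-\pi}^{\pi}w(Re^{i\theta})\,d\theta$ is continuous (being convex in $\log R$ for subharmonic $w$) then yields the circular mean inequality, and hence the lemma.

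I expect the step requiring the most care to be this last circular mean inequality, and within it the Poisson-modification bound $w\le h_R$ on $\{|z|<R\}$ (a standard maximum-principle argument, using that $w$ is upper semicontinuous, bounded above on $\{|z|=R\}$, and has integrable boundary trace there) together with the passage $R\downarrow r$. The remaining ingredients---closure of subharmonic functions under $w\mapsto\max\{w,c\}$ and under $w\mapsto w\circ(\text{holomorphic})$, and the bookkeeping with Lemma~\ref{p9-lemma-001}---are routine. An alternative would be Baernstein's original scheme, proving directly that $re^{i\theta}\mapsto v^*(re^{i\theta})$ is subharmonic on the upper half-disk and applying a maximum principle to $u^*-v^*$, but the reduction above is shorter.
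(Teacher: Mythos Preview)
The paper does not supply a proof of this lemma at all: it is quoted verbatim from \cite[Lemma~2]{Leung-1979} and used as a black box in the proof of Theorem~\ref{p9-theorem-020}. So there is no ``paper's own proof'' to compare against.

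That said, your argument is correct and is essentially the standard route. You reduce the $*$-function inequality to the integral inequality~(b) of Lemma~\ref{p9-lemma-001}, observe that $[u-t]^+=(\max\{v-t,0\})\circ\omega$ with $\max\{v-t,0\}$ subharmonic, and then invoke the Littlewood-type circular mean inequality for subharmonic functions under a Schwarz map, which you verify via Poisson modification plus the mean value property. Each step is sound; the only cosmetic point is that one could bypass the limit $R\downarrow r$ by appealing directly to the monotonicity of circular means (so that $\frac{1}{2\pi}\int w(Re^{i\theta})\,d\theta$ decreases to $\frac{1}{2\pi}\int w(re^{i\theta})\,d\theta$ as $R\downarrow r$), but your continuity argument via convexity in $\log R$ is equally valid. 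Leung's original proof follows the same skeleton---reduce to (b) and apply Littlewood subordination to the subharmonic $[v-t]^+$---so your proposal matches the intended argument.
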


By using Lemmas \ref{p9-lemma-001} and \ref{p9-lemma-005} we prove the following interesting result.

\begin{thm}\label{p9-theorem-020}
Let $\Phi$ be a convex and nondecreasing function in $\mathbb{R}$. Then for $f\in \mathcal{U}(\lambda)$ with $0<\lambda\le 1$ and $0<r<1$, we have
$$
\int_{-\pi}^{\pi} \Phi(\pm \log|f(re^{i\theta})|)\,d\theta \le \int_{-\pi}^{\pi} \Phi(\pm \log|k_{\lambda}(re^{i\theta})|)\,d\theta
$$
where $k_{\lambda}$ is defined by (\ref{p9-010}).
\end{thm}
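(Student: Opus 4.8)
The plan is to reduce the stated inequality, for each fixed $r\in(0,1)$, to a comparison of $*$-functions and then invoke Lemmas \ref{p9-lemma-001} and \ref{p9-lemma-005}. Fix $r\in(0,1)$, put $g_{\pm}(\theta)=\pm\log|f(re^{i\theta})|$ and $h_{\pm}(\theta)=\pm\log|k_\lambda(re^{i\theta})|$ for $\theta\in[-\pi,\pi]$, and observe first that $g_{\pm},h_{\pm}\in L^1[-\pi,\pi]$: indeed $f\in\mathcal U(\lambda)$ vanishes only, and simply, at the origin, so $f(z)/z$ and $z/f(z)$ are analytic and zero-free in $\mathbb D$, while $k_\lambda(z)/z=1/\bigl((1-z)(1-\lambda z)\bigr)$ and $z/k_\lambda(z)=(1-z)(1-\lambda z)$ are analytic and zero-free in $\mathbb D$ because their zeros $1,1/\lambda$ lie in $\{|z|\ge 1\}$ when $0<\lambda\le 1$; hence $g_\pm,h_\pm$ are continuous on $[-\pi,\pi]$. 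By Lemma \ref{p9-lemma-001}, the implication (c)$\Rightarrow$(a) (applied to $g_\pm$ and $h_\pm$) shows that it is enough to prove $g_\pm^*(\theta)\le h_\pm^*(\theta)$ for $0\le\theta\le\pi$.

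Next I would produce the subharmonic composition needed for Lemma \ref{p9-lemma-005}. By \eqref{p9-010} there is an analytic $\omega:\mathbb D\to\mathbb D$ with $\omega(0)=0$ and $f(z)/z=k_\lambda(\omega(z))/\omega(z)$ on $\mathbb D$; taking reciprocals gives $z/f(z)=\omega(z)/k_\lambda(\omega(z))$ with the \emph{same} $\omega$. Hence the functions $u_\pm(z):=\pm\log|f(z)/z|$ and $v_\pm(z):=\pm\log|k_\lambda(z)/z|$, which are harmonic in $\mathbb D$ (the arguments of the logarithms being analytic and zero-free there), satisfy $u_\pm(z)=v_\pm(\omega(z))$ for all $z\in\mathbb D$ --- for the $+$ sign this is \eqref{p9-010} rewritten multiplicatively, for the $-$ sign it is the reciprocal identity. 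Since harmonic functions are subharmonic, Lemma \ref{p9-lemma-005} yields
\[
u_\pm^*(re^{i\theta})\ \le\ v_\pm^*(re^{i\theta}),\qquad 0<r<1,\ 0\le\theta\le\pi .
\]

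Finally I would pass from $f(z)/z$ back to $f(z)$. On $|z|=r$ one has $g_\pm(\theta)=u_\pm(re^{i\theta})\pm\log r$ and $h_\pm(\theta)=v_\pm(re^{i\theta})\pm\log r$, and for any real constant $c$ and any $\phi\in L^1[-\pi,\pi]$ one has $(\phi+c)^*(\theta)=\phi^*(\theta)+2\theta c$, because $\int_E c\,dx=2\theta c$ is independent of the admissible set $E$ with $|E|=2\theta$. Thus the additive constant $\pm\log r$ cancels on both sides, and $g_\pm^*(\theta)\le h_\pm^*(\theta)$ follows from the previous display. Invoking Lemma \ref{p9-lemma-001} ((c)$\Rightarrow$(a)) once more then gives $\int_{-\pi}^{\pi}\Phi(g_\pm(\theta))\,d\theta\le\int_{-\pi}^{\pi}\Phi(h_\pm(\theta))\,d\theta$ for every convex nondecreasing $\Phi$ and every $0<r<1$, which is exactly the assertion. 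I expect no serious obstacle here; the two things to get right are the observation that one and the same Schwarz function from \eqref{p9-010} serves both the $+$ and the $-$ case after reciprocation, and the (elementary) bookkeeping of the shift by $\log r$ inside the $*$-function.
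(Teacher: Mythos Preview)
Your argument is correct and follows essentially the same route as the paper: both proofs combine the subordination \eqref{p9-010} with Lemma~\ref{p9-lemma-005} to obtain a $*$-function inequality for $\pm\log|f(z)/z|$, absorb the additive constant $\pm\log r$, and then invoke Lemma~\ref{p9-lemma-001}. Your bookkeeping for the shift, $(\phi+c)^*(\theta)=\phi^*(\theta)+2\theta c$, is in fact more accurate than the paper's shorthand $(g+c)^*=g^*+c$; either way the same constant appears on both sides and the inequality is preserved.
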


\begin{proof}
For $f\in \mathcal{U}(\lambda)$, it is known that $f(z)/z\prec k_{\lambda}(z)/z$. Since $k_{\lambda}(z)\ne 0$ in $\mathbb{D}\setminus\{0\}$, it follows that
\begin{equation}\label{p9-026}
\pm\log\frac{f(z)}{z}\prec \pm\log\frac{k_{\lambda}(z)}{z}.
\end{equation}
Since $\log|f(z)/z|$ is subharmonic in $\mathbb{D}$, an application of Lemma \ref{p9-lemma-005} in (\ref{p9-026}) gives
$$
\left(\pm\log\left|\frac{f(re^{i\theta})}{re^{i\theta}}\right|\right)^* \le \left(\pm\log\left|\frac{k_{\lambda}(re^{i\theta})}{re^{i\theta}}\right|\right)^*.
$$
For any $g\in L^1[-\pi,\pi]$ and a constant $c\in\mathbb{R}$, we note that $(g(re^{i\theta})+c)^*=g^*(re^{i\theta})+c$. Therefore,
\begin{equation}\label{p9-027}
\left(\pm\log|f(re^{i\theta})|\right)^* \le \left(\pm\log|k_{\lambda}(re^{i\theta})|\right)^*.
\end{equation}
The conclusion now follows from (\ref{p9-027}) together with Lemma \ref{p9-lemma-001}.
\end{proof}

\begin{cor}
If $f\in \mathcal{U}(\lambda)$ and $k_{\lambda}$ is defined by (\ref{p9-010}) then for any $0<r<1$, $p\in\mathbb{R}$, we have
\begin{equation*}\label{p9-025a}
\frac{1}{2\pi}\int_{0}^{2\pi} |f(re^{i\theta})|^p\,d\theta \le \frac{1}{2\pi}\int_{0}^{2\pi} |k_{\lambda}(re^{i\theta})|^p\,d\theta.
\end{equation*}
\end{cor}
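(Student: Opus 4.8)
The plan is to deduce the corollary immediately from Theorem \ref{p9-theorem-020} by feeding it the right convex nondecreasing test function $\Phi$, treating the sign of $p$ separately. The key observation is that $|w|^{p}=\exp\!\big(p\log|w|\big)$, so the integrand $|f(re^{i\theta})|^{p}$ can be written as $\Phi\big(\pm\log|f(re^{i\theta})|\big)$ for a suitable exponential $\Phi$, and the two integral-means statements become two instances of the single inequality in Theorem \ref{p9-theorem-020}.

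For $p=0$ there is nothing to prove, both sides being equal to $1$. For $p>0$ I would take $\Phi(x)=e^{px}$, which is nondecreasing and convex on $\mathbb{R}$ since its first and second derivatives, $pe^{px}$ and $p^{2}e^{px}$, are positive; Theorem \ref{p9-theorem-020} with the upper choice of sign then gives
\begin{align*}
\int_{-\pi}^{\pi} |f(re^{i\theta})|^{p}\,d\theta
&= \int_{-\pi}^{\pi} \Phi\big(\log|f(re^{i\theta})|\big)\,d\theta\\
&\le \int_{-\pi}^{\pi} \Phi\big(\log|k_{\lambda}(re^{i\theta})|\big)\,d\theta
= \int_{-\pi}^{\pi} |k_{\lambda}(re^{i\theta})|^{p}\,d\theta.
\end{align*}
For $p<0$, write $p=-q$ with $q>0$ and use the same $\Phi(x)=e^{qx}$, now with the lower choice of sign in Theorem \ref{p9-theorem-020}: since $\Phi\big(-\log|w|\big)=e^{-q\log|w|}=|w|^{-q}=|w|^{p}$, the theorem again yields the claimed bound. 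Dividing by $2\pi$ and noting that $\int_{-\pi}^{\pi}$ and $\int_{0}^{2\pi}$ agree by $2\pi$-periodicity of the integrand completes the argument.

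I do not expect a genuine obstacle here. The only points requiring (routine) care are verifying that the exponential is convex and nondecreasing, choosing the correct sign according to whether $p$ is positive or negative, and observing that $\log|f(re^{i\theta})|\in L^{1}[-\pi,\pi]$ — the last being immediate because every $f\in\mathcal{U}(\lambda)$ is non-vanishing on $\mathbb{D}\setminus\{0\}$, so $f(re^{i\theta})$ is continuous and zero-free on $|z|=r<1$. In effect the corollary is nothing but the ``$\Phi=\exp$'' specialization of Theorem \ref{p9-theorem-020}, i.e. the classical passage from the star-function / convex-function formulation of an integral-means inequality to its $L^{p}$ form; in particular it extends the $n=0$ case of Theorem \ref{p9-theorem-010} to all $0<\lambda\le 1$.
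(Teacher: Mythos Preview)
Your proposal is correct and follows exactly the paper's own approach: the paper simply notes that $p=0$ is trivial and that for $p\ne 0$ one takes $\Phi(x)=e^{px}$ (with the appropriate sign) in Theorem \ref{p9-theorem-020}. Your version is merely a more careful unpacking of the same one-line argument, including the split into $p>0$ and $p<0$ and the routine integrability remark.
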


\begin{proof}
For $p=0$, there is nothing to prove. For $p\ne0$, by choosing $\Phi(x)=e^{px}$, $p>0$, in Theorem \ref{p9-theorem-020}, we obtain the desired result.
\end{proof}

For a locally univalent function $f$ in $\mathbb{D}$, the pre-Schwarzian derivative $T_f$ is defined by $T_f=f''/f'$ and the pre-Schwarzian norm of $f$ is defined by
$$
\|f\|= \sup_{z\in\mathbb{D}} (1-|z|^2)|T_f(z)|.
$$
The pre-Schwarzian norm has significant meaning in the theory of Teichm\"{u}ller spaces. It is interesting to note that the pre-Schwarzian norm of $f$ is nothing but the Bloch seminorm of the function $\log f'$. In 1976, Yamashita \cite{Yamashita-1976} observed that $\|f\|$ is finite if, and only if, $f$ is uniformly locally univalent in $\mathbb{D}$, i.e. there exists a positive constant $\rho$ for which $f$ is univalent in every disk of hyperbolic radius $\rho$ in $\mathbb{D}$.
Furthermore, $\|f\|\leq 6$ if $f$ is univalent in $\mathbb{D}$ and conversely, $f$ is univalent in $\mathbb{D}$ if $\|f\|\leq 1$ and these bounds are sharp (see \cite{Becker-Pommerenke-1984}). Moreover, if $f$ can be extended to a $k$-quasiconformal automorphism of the Riemann sphere $\mathbb{\widehat{C}}$ then we have $\|f\|\leq 6k$. In 2004, Kim {\it et al.} \cite{Kim-Ponnusamy-Sugawa-2004b} obtained estimate of $\|J_\alpha[f]\|$ for functions in $\mathcal{U}(\lambda)$ with second coefficient $a_2$ fixed. Our next result gives estimate of $\|J_\alpha[f]\|$ for functions in $\mathcal{U}(\lambda)$.

\begin{thm}\label{p9-theorem-025}
For $f\in\mathcal{U}(\lambda)$, $0<\lambda\le1$, let $J_\alpha[f]$ be defined by (\ref{p9-005}) where $\alpha\in\mathbb{C}$. Then
\begin{equation}\label{p9-040}
\|J_\alpha[f]\| \le \|J_\alpha[k_\lambda]\|=
\begin{cases}
2|\alpha| & \text{for}\quad 0< \lambda \le \frac{1}{3}\\[3mm]
\frac{3+\lambda-2\sqrt{2(1-\lambda^2)}}{\lambda}|\alpha| & \text{for}\quad \frac{1}{3}< \lambda \le 1,
\end{cases}
\end{equation}
where $k_\lambda$ is defined by (\ref{p9-010}).
\end{thm}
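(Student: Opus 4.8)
The plan is to recognize $\|J_\alpha[f]\|$ as a Bloch seminorm and then push it through the subordination (\ref{p9-010}). For an analytic $h$ on $\mathbb D$ put $\|h\|_{\mathcal B}:=\sup_{z\in\mathbb D}(1-|z|^2)|h'(z)|$. Since every $f\in\mathcal U(\lambda)$ is zero-free on $\mathbb D\setminus\{0\}$ and $f(z)/z\to1$ as $z\to0$, the function $f(z)/z$ is analytic and zero-free on $\mathbb D$, so $(f(z)/z)^\alpha$ is a well-defined analytic branch (with value $1$ at the origin) and $J_\alpha[f]'(z)=(f(z)/z)^\alpha$. Hence $T_{J_\alpha[f]}(z)=\frac{d}{dz}\log J_\alpha[f]'(z)=\alpha\,\frac{d}{dz}\log\frac{f(z)}{z}$, so $\|J_\alpha[f]\|=|\alpha|\,\|\log(f/z)\|_{\mathcal B}$, and similarly $\|J_\alpha[k_\lambda]\|=|\alpha|\,\|\log(k_\lambda/z)\|_{\mathcal B}$.

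Next I would use the monotonicity of $\|\cdot\|_{\mathcal B}$ under subordination: if $h_1\prec h_2$, write $h_1=h_2\circ\omega$ with $\omega\colon\mathbb D\to\mathbb D$ analytic; then by the Schwarz--Pick lemma $(1-|z|^2)|\omega'(z)|\le1-|\omega(z)|^2$, so $(1-|z|^2)|h_1'(z)|=|h_2'(\omega(z))|(1-|z|^2)|\omega'(z)|\le(1-|\omega(z)|^2)|h_2'(\omega(z))|\le\|h_2\|_{\mathcal B}$, i.e.\ $\|h_1\|_{\mathcal B}\le\|h_2\|_{\mathcal B}$. By (\ref{p9-010}), $f(z)/z\prec k_\lambda(z)/z$; since $k_\lambda(w)/w=\bigl((1-w)(1-\lambda w)\bigr)^{-1}$ is zero-free on $\mathbb D$, post-composing this subordination with the branch of $\log$ vanishing at $0$ gives $\log(f(z)/z)\prec\log(k_\lambda(z)/z)$, whence $\|\log(f/z)\|_{\mathcal B}\le\|\log(k_\lambda/z)\|_{\mathcal B}$ and therefore $\|J_\alpha[f]\|\le\|J_\alpha[k_\lambda]\|$. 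This gives the inequality in (\ref{p9-040}).

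It remains to evaluate $\|J_\alpha[k_\lambda]\|=|\alpha|\,\|\log(k_\lambda/z)\|_{\mathcal B}$. Here $\frac{d}{dz}\log\frac{k_\lambda(z)}{z}=\frac{1}{1-z}+\frac{\lambda}{1-\lambda z}$, and both summands have maximal modulus on $\{|z|=r\}$ at $z=r$, where they are positive; hence $\|\log(k_\lambda/z)\|_{\mathcal B}=\sup_{0\le r<1}g(r)$ with $g(r)=(1-r^2)\bigl(\tfrac1{1-r}+\tfrac\lambda{1-\lambda r}\bigr)=\dfrac{(1+r)(1+\lambda-2\lambda r)}{1-\lambda r}$. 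A short computation shows $g'(r)$ has the sign of $2\lambda^2r^2-4\lambda r+(1+\lambda^2)$, whose smaller root is $r_0=\dfrac{2-\sqrt{2(1-\lambda^2)}}{2\lambda}$, and one checks that $r_0\in(0,1)$ exactly when $1/3<\lambda<1$, while the larger root is always $\ge1$. Thus for $0<\lambda\le1/3$ the function $g$ increases on $[0,1)$ and $\sup g=\lim_{r\to1^-}g(r)=2$; for $1/3<\lambda<1$ the supremum is the interior maximum $g(r_0)$, which, using $1-\lambda r_0=\tfrac12\sqrt{2(1-\lambda^2)}$ and simplifying, equals $\dfrac{3+\lambda-2\sqrt{2(1-\lambda^2)}}{\lambda}$ (and $g(r_0)\ge2$, since this reduces to $(3\lambda-1)^2\ge0$, so it really is the supremum); finally the case $\lambda=1$ follows as the limit $r\to1^-$, giving $4$, in agreement with the second formula. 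Multiplying by $|\alpha|$ yields (\ref{p9-040}).

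I do not expect a genuine obstacle. The single structural ingredient is the subordination-monotonicity of the Bloch seminorm, which is the two-line Schwarz--Pick estimate above; the only place demanding care is the algebra simplifying $g(r_0)$ together with the case split at $\lambda=1/3$, which is exactly the threshold at which the critical point $r_0$ enters the interval $(0,1)$.
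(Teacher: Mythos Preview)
Your proof is correct and follows essentially the same route as the paper's: reduce $\|J_\alpha[f]\|$ to $|\alpha|$ times the Bloch seminorm of $\log(f/z)$, pass the subordination (\ref{p9-010}) through $\log$ and use the Schwarz--Pick monotonicity of the Bloch seminorm (which the paper cites from \cite{Pommerenke-book-1975} rather than proving), and then maximize $\phi(r)=(1-r^2)\bigl(\tfrac{1}{1-r}+\tfrac{\lambda}{1-\lambda r}\bigr)$ over $[0,1)$. Your critical point $r_0=(2-\sqrt{2(1-\lambda^2)})/(2\lambda)$ is in fact the correct one---the paper's displayed $r_0$ is missing the factor $2$ in the denominator, though its final value $\phi(r_0)$ agrees with yours.
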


\begin{proof}
Taking a logarithmic differentiation in (\ref{p9-005}), we obtain $J_\alpha[f]=\alpha J[f]$ and so
$$
\|J_\alpha[f]\|=|\alpha|\|J[f]\|.
$$
Hence it suffices to show the inequality (\ref{p9-040}) for the case $\alpha=1$. For $f\in\mathcal{U}(\lambda)$, let $F=J[f]$ and $K_\lambda=J[k_\lambda]$. Then by (\ref{p9-010}), we have
$$
F'(z)=\frac{f(z)}{z}\prec \frac{k_\lambda(z)}{z}=K'_\lambda(z).
$$
It is well-known that (see \cite[formula 4, page 35]{Pommerenke-book-1975}) if $g \prec h$ then
$$
\sup_{z\in\mathbb{D}}(1-|z|^2) |g'(z)|\le \sup_{z\in\mathbb{D}}(1-|z|^2) |h'(z)|.
$$
Using the fact $\log F'\prec \log K'_\lambda$, we obtain
$$
\|F\| = \sup_{z\in\mathbb{D}}(1-|z|^2)\left|\frac{F''(z)}{F'(z)}\right| \le \sup_{z\in\mathbb{D}}(1-|z|^2)\left|\frac{K''_\lambda(z)}{K'_\lambda(z)}\right|= \|K_\lambda\|.
$$
It remains to find the value of $\|J[k_\lambda]\|=\|K_\lambda\|$. Since $K'_\lambda(z)=k_\lambda(z)/z$, by taking logarithmic differentiation, we obtain
$$
\frac{K''_\lambda(z)}{K'_\lambda(z)} =\frac{1}{1-z}+\frac{\lambda}{1-\lambda z}
$$
and so
\begin{align*}
\sup_{z\in\mathbb{D}}(1-|z|^2)\left|\frac{K''_\lambda(z)}{K'_\lambda(z)}\right|
&= \sup_{z\in\mathbb{D}}(1-|z|^2)\left|\frac{1}{1-z}+\frac{\lambda}{1-\lambda z}\right|\\
&\le \sup_{z\in\mathbb{D}}(1-|z|^2)\left(\frac{1}{1-|z|}+\frac{\lambda}{1-\lambda |z|}\right)\\
&= \sup_{0<r<1}\phi(r),
\end{align*}
where
$$
\phi(r)= 1+r+\frac{\lambda(1-r^2)}{1-\lambda r}.
$$
It is important to note that for $z=t$ with $0<t<1$, we have
$$
(1-|z|^2)\left|\frac{K''_\lambda(z)}{K'_\lambda(z)}\right| = (1-t^2)\left|\frac{1}{1-t}+\frac{\lambda}{1-\lambda t}\right|=\phi(t).
$$
Therefore,
$$
\|J[k_\lambda]\|=\|K_\lambda\|=\sup_{0<r<1}\phi(r).
$$
To find the critical points of $\phi(r)$, we solve $\phi'(r)=0$. The roots of $\phi'(r)=0$ are given by
$$
r_0=\frac{2-\sqrt{2(1-\lambda^2)}}{\lambda} \quad\text{and}\quad r_1=\frac{2+\sqrt{2(1-\lambda^2)}}{\lambda}.
$$
But we note that $r_1>1$ for all $0<\lambda\le 1$, while $r_0<1$ for all $1/3<\lambda\le 1$. Moreover, it is an easy exercise to verify that
$$
\phi(r_0)=\frac{3+\lambda-2\sqrt{2(1-\lambda^2)}}{\lambda} \ge \phi(1) \quad\text{for}\quad \frac{1}{3}< \lambda \le 1.
$$
For $0<\lambda\le 1/3$, clearly $\phi(0)=1+\lambda$ and $\phi(1)=2$. Consequently,
$$
\|J[k_\lambda]\| = \sup_{0<r<1}\phi(r) =
\begin{cases}
2 & \text{for}\quad 0< \lambda \le \frac{1}{3}\\[3mm]
\frac{3+\lambda-2\sqrt{2(1-\lambda^2)}}{\lambda} & \text{for}\quad \frac{1}{3}< \lambda \le 1.
\end{cases}
$$
This completes the proof.
\end{proof}

\begin{cor}
For $f\in\mathcal{U}$, let $J_\alpha[f]$ be defined by (\ref{p9-005}) where $\alpha\in\mathbb{C}$. Then
$$
\|J_\alpha[f]\| \le \|J_\alpha[k]\|= 4|\alpha|
$$
where $k$ is the Koebe function.
\end{cor}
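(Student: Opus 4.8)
The plan is to obtain this statement as the immediate specialization $\lambda = 1$ of Theorem \ref{p9-theorem-025}. First I would note that $\mathcal{U} = \mathcal{U}(1)$ and $k = k_1$ by definition, so every $f \in \mathcal{U}$ satisfies the hypotheses of Theorem \ref{p9-theorem-025} with $\lambda = 1$. Since $\lambda = 1$ lies in the range $1/3 < \lambda \le 1$, the second branch of the estimate (\ref{p9-040}) applies and gives
$$
\|J_\alpha[f]\| \le \|J_\alpha[k_1]\| = \frac{3 + 1 - 2\sqrt{2(1 - 1^2)}}{1}\,|\alpha| = 4|\alpha|,
$$
because the surd $\sqrt{2(1-\lambda^2)}$ vanishes at $\lambda = 1$. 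This is exactly the asserted inequality, and it is sharp, being attained by $f = k$: in the notation of the proof of Theorem \ref{p9-theorem-025}, for $\lambda = 1$ the auxiliary function reduces to $\phi(r) = 1 + r + (1 - r^2)/(1 - r) = 2(1 + r)$, whose supremum over $0 < r < 1$ equals $4$, so $\|J[k]\| = 4$ and $\|J_\alpha[k]\| = |\alpha|\,\|J[k]\| = 4|\alpha|$.

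There is essentially no obstacle here; the only point worth a line of verification is that the endpoint $\lambda = 1$ genuinely belongs to the closed subinterval $(1/3, 1]$ on which the second formula in (\ref{p9-040}) is valid, so that no separate limiting argument is needed. Alternatively, one could argue directly without invoking the case analysis of Theorem \ref{p9-theorem-025}: from $f \in \mathcal{U}$ one has $f(z)/z \prec k(z)/z$ by (\ref{p9-010}) with $\lambda = 1$, hence $\log (J_\alpha[f])' = \alpha \log(f(z)/z) \prec \alpha \log(k(z)/z) = \log (J_\alpha[k])'$; then the monotonicity of the Bloch seminorm under subordination (quoted in the proof of Theorem \ref{p9-theorem-025} from \cite{Pommerenke-book-1975}) yields $\|J_\alpha[f]\| \le \|J_\alpha[k]\|$, and a direct computation with $(J[k])''/(J[k])' = 2/(1-z)$ gives $\|J_\alpha[k]\| = 4|\alpha|$.
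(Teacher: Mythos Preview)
Your proof is correct and is exactly the intended argument: the corollary is stated in the paper without proof precisely because it is the immediate specialization $\lambda=1$ of Theorem~\ref{p9-theorem-025}, and your substitution into the second branch of (\ref{p9-040}) together with the check that $\phi(r)=2(1+r)$ has supremum $4$ is all that is required. The alternative direct route you sketch via $\log(f(z)/z)\prec\log(k(z)/z)$ and $(J[k])''/(J[k])'=2/(1-z)$ is also fine and simply reproduces the relevant piece of the proof of Theorem~\ref{p9-theorem-025} in the special case.
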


Our next result deals with the Fekete-Szeg\"{o} problem for functions in the class $\mathcal{U}(\lambda)$.

\begin{thm}\label{p9-theorem-030}
Let $f\in\mathcal{U}(\lambda)$, $0<\lambda\le1$ be of the form (\ref{p9-001}) and $\mu$ be a complex number. Then
\begin{equation}\label{p9-045}
|a_3-\mu a_2^2|\le
\begin{cases}
\displaystyle |(1+\lambda+\lambda^2)-\mu(1+\lambda)^2| & \text{for}\quad \left|\mu-\frac{1+\lambda+\lambda^2}{(1+\lambda)^2}\right|\ge \frac{1}{1+\lambda}\\[3mm]
\displaystyle  1+\lambda & \text{for}\quad \left|\mu-\frac{1+\lambda+\lambda^2}{(1+\lambda)^2}\right|\le \frac{1}{1+\lambda}.
\end{cases}
\end{equation}
The first inequality is sharp and equality holds for the function $k_\lambda$ defined by (\ref{p9-010}).
\end{thm}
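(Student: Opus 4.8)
The plan is to start from the subordination $f(z)/z \prec k_\lambda(z)/z$ recorded in (\ref{p9-010}), which is the most convenient description of $\mathcal{U}(\lambda)$ for coefficient estimates because it decouples the two free parameters. By the definition of subordination there is a Schwarz function $\omega$, i.e. an analytic self-map of $\mathbb{D}$ with $\omega(0)=0$, say $\omega(z)=c_1 z + c_2 z^2 + \cdots$, such that
$$
\frac{f(z)}{z}=\frac{1}{(1-\omega(z))(1-\lambda\omega(z))}.
$$
Expanding $\frac{1}{(1-w)(1-\lambda w)}=\sum_{n\ge 0}(1+\lambda+\cdots+\lambda^n)\,w^n$ (valid for every $0<\lambda\le 1$) and substituting $w=\omega(z)$, a comparison of Taylor coefficients with $f(z)/z = 1+a_2 z+a_3 z^2+\cdots$ gives
$$
a_2=(1+\lambda)c_1,\qquad a_3=(1+\lambda)c_2+(1+\lambda+\lambda^2)c_1^2.
$$
This first step is purely computational.

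Next I would form the Fekete--Szeg\"{o} combination,
$$
a_3-\mu a_2^2=(1+\lambda)c_2+\left[(1+\lambda+\lambda^2)-\mu(1+\lambda)^2\right]c_1^2,
$$
and set $B:=\left|(1+\lambda+\lambda^2)-\mu(1+\lambda)^2\right|$. Using the classical coefficient inequalities for Schwarz functions, $|c_1|\le 1$ and $|c_2|\le 1-|c_1|^2$, the triangle inequality gives
$$
|a_3-\mu a_2^2|\le (1+\lambda)(1-|c_1|^2)+B\,|c_1|^2=(1+\lambda)+(B-1-\lambda)\,|c_1|^2.
$$
The right-hand side is affine in $t:=|c_1|^2\in[0,1]$, so its maximum equals $B$ if $B\ge 1+\lambda$ (taken at $t=1$) and equals $1+\lambda$ if $B\le 1+\lambda$ (taken at $t=0$). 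Since $B=(1+\lambda)^2\left|\mu-\frac{1+\lambda+\lambda^2}{(1+\lambda)^2}\right|$, the condition $B\ge 1+\lambda$ is equivalent to $\left|\mu-\frac{1+\lambda+\lambda^2}{(1+\lambda)^2}\right|\ge\frac{1}{1+\lambda}$, and this produces exactly the two cases in (\ref{p9-045}).

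For sharpness of the first inequality I would take $\omega(z)=z$, so that $f(z)/z=k_\lambda(z)/z$, i.e. $f=k_\lambda\in\mathcal{U}(\lambda)$; then $c_1=1$, $c_2=0$ and $a_3-\mu a_2^2=(1+\lambda+\lambda^2)-\mu(1+\lambda)^2$, of modulus $B$. One should note that (\ref{p9-010}) is only a necessary condition for $f\in\mathcal{U}(\lambda)$, so the computation above a priori delivers only an upper bound; the point is that $k_\lambda$ itself realizes the first-case bound from within $\mathcal{U}(\lambda)$, so no sharpness is lost. I do not anticipate a genuine analytic obstacle: the argument rests solely on the elementary estimate $|c_2|\le 1-|c_1|^2$, and the only place that needs real care is the bookkeeping in the case analysis---checking that the threshold $B=1+\lambda$ matches the stated circle on $\mu$, and that equality in the triangle inequality can be attained simultaneously with $|c_2|=1-|c_1|^2$ at the optimal $c_1$ (for the second case this is achieved by a rotation of $\omega(z)=z^2$, giving $a_2=0$ and $|a_3|=1+\lambda$).
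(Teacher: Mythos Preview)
Your argument is essentially identical to the paper's: the same subordination $f(z)/z\prec k_\lambda(z)/z$, the same Schwarz function $\omega$, the same expressions $a_2=(1+\lambda)c_1$ and $a_3=(1+\lambda)c_2+(1+\lambda+\lambda^2)c_1^2$, and the same use of $|c_2|\le 1-|c_1|^2$ together with the affine maximization in $|c_1|^2$.

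One correction to your final parenthetical: the choice $\omega(z)=z^2$ does \emph{not} produce a function in $\mathcal{U}(\lambda)$. The corresponding $f$ is $g(z)=z/\bigl((1-z^2)(1-\lambda z^2)\bigr)$, and the paper checks in the remark following the theorem that $\bigl|g'(z)(z/g(z))^2-1\bigr|=\bigl|z^2(1+\lambda-3\lambda z^2)\bigr|>1$ for $z$ purely imaginary and close to the boundary, so $g\notin\mathcal{U}(\lambda)$. This is precisely why the theorem asserts sharpness only for the first inequality; the second bound $1+\lambda$ may fail to be sharp, and your proposed extremal does not settle it. Since the statement does not claim sharpness in the second case, your proof of the theorem itself is complete---just drop that last claim.
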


\begin{proof}
If $f\in\mathcal{U}(\lambda)$ then from (\ref{p9-010}) there exists a function $\omega:\mathbb{D}\rightarrow \mathbb{D}$ with $\omega(0)$ and of the form $\omega(z)=\sum_{n=1}^{\infty} c_nz^n$ such that
\begin{equation}\label{p9-050}
\frac{f(z)}{z}=\frac{1}{(1-\omega(z))(1-\lambda\omega(z))}.
\end{equation}
In terms of series formulation, (\ref{p9-050}) can be written as
$$
\sum_{n=1}^{\infty} a_{n+1}z^n=\sum_{n=1}^{\infty} A_n(\omega(z))^n,
$$
where $A_n=1+\sum_{j=1}^{n}\lambda^j$. By comparing the coefficients of $z^n$ for $n=1,2$, we obtain
$$
a_2=(1+\lambda)c_1 \quad \text{and} \quad a_3=(1+\lambda)c_2+(1+\lambda+\lambda^2)c_1^2.
$$
Therefore,
\begin{align}\label{p9-055}
|a_3-\mu a_2^2|
&= \left|(1+\lambda)c_2+((1+\lambda+\lambda^2)-\mu(1+\lambda)^2)c_1^2 \right|\\[2mm]
&\le (1+\lambda)|c_2|+\left|(1+\lambda+\lambda^2)-\mu(1+\lambda)^2 \right||c_1|^2\nonumber\\[2mm]
&\le (1+\lambda)(1-|c_1|^2)+\left|(1+\lambda+\lambda^2)-\mu(1+\lambda)^2 \right||c_1|^2\nonumber\\[2mm]
&\le (1+\lambda)+\left\{\left|(1+\lambda+\lambda^2)-\mu(1+\lambda)^2 \right|-(1+\lambda)\right\}|c_1|^2.\nonumber
\end{align}
Since $|c_1|\le1$, the conclusion follows from the inequality (\ref{p9-055}).
\end{proof}

\begin{rem}
The first inequality of (\ref{p9-045}) is sharp and equality holds for the function $k_\lambda$ defined by (\ref{p9-010}), whereas the second inequality of (\ref{p9-045}) may not be sharp. For the function $g(z)=z/((1-z^2)(1-\lambda z^2))$, it is easy to see that $a_2(g)=0$ and $a_3(g)=1+\lambda$ and so $|a_3-\mu a_2^2|=1+\lambda$. But
$$
\left|g'(z)\left(\frac{z}{g(z)}\right)^2-1 \right|= \left|z^2(1+\lambda-3\lambda z^2) \right|>1
$$
when $z$ is purely imaginary number. This shows that $g\not\in\mathcal{U}(\lambda)$.
\end{rem}


If $f\in\mathcal{U}(\lambda)$ is of the form (\ref{p9-001}) then a simple exercise shows that
$$
f'(z)\left(\frac{z}{f(z)}\right)^2-1= (a_3-a_2^2)z^2+\cdots
$$
and so by the Schwarz lemma we obtain $|a_3-a_2^2|\le \lambda$. Thus, for $\mu\ge1$ we immediately have
\begin{align}\label{p9-060}
|a_3-\mu a_2^2|
&\le |a_3- a_2^2|+(\mu-1)|a_2|^2\\[2mm]
&\le \lambda+(\mu-1)(1+\lambda)^2\nonumber\\[2mm]
&=\mu(1+\lambda)^2-(1+\lambda+\lambda^2).\nonumber
\end{align}
The equality in (\ref{p9-060}) holds for the function $k_\lambda$. If we consider $\mu$ as a real number in Theorem \ref{p9-theorem-030} then the condition
$$
\left|\mu-\frac{1+\lambda+\lambda^2}{(1+\lambda)^2}\right|\ge \frac{1}{1+\lambda}
$$
is equivalent to
$$
\mu\in\left(-\infty, \left(\lambda/(1+\lambda)\right)^2\right) \cup \left(1+1/(1+\lambda)^2,\infty\right).
$$
From the above discussion, we immediately have

\begin{cor}
Let $f\in\mathcal{U}(\lambda)$ be of the form (\ref{p9-001}) and $\mu$ be a real number. Then the following sharp inequality
$$
|a_3-\mu a_2^2|\le |(1+\lambda+\lambda^2)-\mu(1+\lambda)^2|\quad\text{for}\quad \mu\in\left(-\infty, \left(\lambda/(1+\lambda)\right)^2\right) \cup \left(1,\infty\right).
$$
holds and equality attains for the function $k_\lambda$.
\end{cor}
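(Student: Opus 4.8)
The plan is to obtain this corollary by patching together two facts already at hand in the excerpt: Theorem~\ref{p9-theorem-030}, which controls $|a_3-\mu a_2^2|$ on the parameter set $\bigl|\mu-\tfrac{1+\lambda+\lambda^2}{(1+\lambda)^2}\bigr|\ge\tfrac{1}{1+\lambda}$, and the auxiliary estimate \eqref{p9-060}, which controls it for every $\mu\ge 1$. Nothing genuinely new needs to be proved; the work is to check that these two ranges, after the algebra is unwound, cover $\bigl(-\infty,(\lambda/(1+\lambda))^2\bigr)\cup(1,\infty)$ and that on each piece the bound reduces to the single expression $|(1+\lambda+\lambda^2)-\mu(1+\lambda)^2|$.

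First I would rewrite the hypothesis of Theorem~\ref{p9-theorem-030} for real $\mu$. The inequality $\bigl|\mu-\tfrac{1+\lambda+\lambda^2}{(1+\lambda)^2}\bigr|\ge\tfrac{1}{1+\lambda}$ means $\mu\le\tfrac{1+\lambda+\lambda^2}{(1+\lambda)^2}-\tfrac{1}{1+\lambda}$ or $\mu\ge\tfrac{1+\lambda+\lambda^2}{(1+\lambda)^2}+\tfrac{1}{1+\lambda}$, and a one-line computation gives $\tfrac{1+\lambda+\lambda^2}{(1+\lambda)^2}-\tfrac{1}{1+\lambda}=\tfrac{\lambda^2}{(1+\lambda)^2}=(\lambda/(1+\lambda))^2$ while $\tfrac{1+\lambda+\lambda^2}{(1+\lambda)^2}+\tfrac{1}{1+\lambda}=1+\tfrac{1}{(1+\lambda)^2}$. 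Thus, for every $\mu\le(\lambda/(1+\lambda))^2$ the first branch of Theorem~\ref{p9-theorem-030} applies verbatim and yields $|a_3-\mu a_2^2|\le|(1+\lambda+\lambda^2)-\mu(1+\lambda)^2|$.

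It remains to treat $\mu\ge 1$, which in particular fills the interval $\bigl[1,\,1+\tfrac{1}{(1+\lambda)^2}\bigr)$ not covered by Theorem~\ref{p9-theorem-030}. For this I would invoke \eqref{p9-060}: combining the Schwarz-lemma bound $|a_3-a_2^2|\le\lambda$ with $|a_2|\le 1+\lambda$ gives $|a_3-\mu a_2^2|\le\lambda+(\mu-1)(1+\lambda)^2=\mu(1+\lambda)^2-(1+\lambda+\lambda^2)$ for all $\mu\ge 1$. The only point worth verifying is that this right-hand side equals $|(1+\lambda+\lambda^2)-\mu(1+\lambda)^2|$, which holds because $\tfrac{1+\lambda+\lambda^2}{(1+\lambda)^2}=1-\tfrac{\lambda}{(1+\lambda)^2}<1\le\mu$, so $\mu(1+\lambda)^2-(1+\lambda+\lambda^2)\ge 0$. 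Taking the union of the two parameter ranges gives the claimed inequality on $\bigl(-\infty,(\lambda/(1+\lambda))^2\bigr)\cup(1,\infty)$.

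For sharpness I would simply evaluate everything at $k_\lambda(z)=z/((1-z)(1-\lambda z))$: expanding $k_\lambda(z)/z$ gives $a_2=1+\lambda$ and $a_3=1+\lambda+\lambda^2$, hence $a_3-\mu a_2^2=(1+\lambda+\lambda^2)-\mu(1+\lambda)^2$ and equality holds for every real $\mu$, in particular on the stated set. I do not anticipate a real obstacle here; the argument is entirely a matter of aligning the parameter regions and confirming the identity $\tfrac{1+\lambda+\lambda^2}{(1+\lambda)^2}-\tfrac{1}{1+\lambda}=(\lambda/(1+\lambda))^2$, since the substantive estimates were already carried out in Theorem~\ref{p9-theorem-030} and in \eqref{p9-060}.
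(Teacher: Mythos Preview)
Your proposal is correct and follows exactly the route the paper takes: the corollary is stated immediately after the discussion leading to \eqref{p9-060} with the phrase ``From the above discussion, we immediately have,'' and your write-up simply makes explicit the patching of the first case of Theorem~\ref{p9-theorem-030} (for $\mu\le(\lambda/(1+\lambda))^2$) with the Schwarz-lemma estimate \eqref{p9-060} (for $\mu\ge 1$), together with the algebraic identification of the parameter ranges that the paper records just before the corollary. The sharpness check at $k_\lambda$ is likewise the paper's intended one.
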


\section{Properties of meromorphic functions associated with $\mathcal{U}(\lambda)$}\label{Main Results-2}

Let $\Sigma$ be the class of meromorphic and univalent functions $g$ on $\Delta:=\{\zeta\in\mathbb{\widehat{C}}:|\zeta|>1\}$ of the form
\begin{equation}\label{p9-030}
g(\zeta)=\zeta+\sum_{n=0}^{\infty} b_n\zeta^{-n}, \quad 1<|\zeta|<\infty.
\end{equation}
It is well recognized that the set $\Sigma$ plays an important role in the study of the class $\mathcal{S}$. The map $f\mapsto g(\zeta)=1/f(1/\zeta)$ gives a one-to-one corresponds between the class $\mathcal{S}$ and the class $\{g\in\Sigma:g(z)\ne 0\}$. Moreover, we have the following formula
$$
g'(\zeta)=\left(\frac{z}{f(z)}\right)^2f'(z),\quad \zeta=\frac{1}{z},
$$
and hence, functions $f$ in $\mathcal{U}(\lambda)$ are associated with the functions $g$ in $\mathcal{M}(\lambda):=\{g\in\Sigma:|g'(\zeta)-1|<\lambda\}$. Further, let
$$
\Sigma_{0}:=\{g\in\Sigma: b_0(g)=0\} \quad\mbox{ and }\quad \mathcal{M}_{0}(\lambda):=\{g\in\mathcal{M}(\lambda): b_0(g)=0\}.
$$
It is important to note that the classes $\Sigma$ and $\mathcal{M}(\lambda)$ are not compact. However, the classes $\Sigma_{0}$ and $\mathcal{M}_{0}(\lambda)$ are compact with respect to the topology of uniform convergence. The classes $\mathcal{M}(\lambda)$ and $\mathcal{M}_{0}(\lambda)$ have been extensively studied by Vasudevarao and Yanagihara \cite{Vasudevarao-Yanagihara-2013}. For $g\in\Sigma$, let $E(g)$ be the omitted set of $g$, i.e., $E(g)=\mathbb{\widehat{C}}\setminus g(\Delta)$. In \cite{Vasudevarao-Yanagihara-2013}, it has been proved that for $0<\lambda \le 1$ and $g\in\mathcal{M}(\lambda)$,
$$
\pi(1-\lambda^2)\le {\rm area\,} (E(g))=\pi\left(1-\sum_{n=1}^{\infty} n|b_n|^2\right) \le \pi.
$$
From this it can be seen that
\begin{equation}\label{p9-035}
\sum_{n=1}^{\infty} n|b_n|^2 \le \lambda^2
\end{equation}
and equality holds in (\ref{p9-035}) for a function $g\in\mathcal{M}(\lambda)$ if, and only if, ${\rm area\,} (E(g))=\pi(1-\lambda^2)$.

In 1955, Springer \cite{Springer-1955} proved that the set of extreme points of $\Sigma_{0}$ contains all functions $g\in\Sigma_{0}$ with ${\rm area\,} (E(g))=0$. We note that the classes $\Sigma$ and $\mathcal{M}(\lambda)$ have no extreme points. Our next result gives a sufficient condition for a function $g$ to be an extreme point of $\mathcal{M}_{0}(\lambda)$.

\begin{thm}\label{p9-theorem-035}
Let $g\in\mathcal{M}_{0}(\lambda)$ with ${\rm area\,} (E(g))=\pi(1-\lambda^2)$. Then $g$ is an extreme point of $\mathcal{M}_{0}(\lambda)$.
\end{thm}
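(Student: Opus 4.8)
The plan is to verify the defining property of an extreme point directly. Suppose $g=tg_1+(1-t)g_2$ with $g_1,g_2\in\mathcal{M}_{0}(\lambda)$ and $0<t<1$; the goal is to show $g_1=g_2$. Writing $g(\zeta)=\zeta+\sum_{n=1}^{\infty}b_n\zeta^{-n}$ and $g_j(\zeta)=\zeta+\sum_{n=1}^{\infty}b_n^{(j)}\zeta^{-n}$ for $j=1,2$ (all with vanishing constant term, since the three functions lie in $\Sigma_{0}$), comparison of coefficients gives $b_n=tb_n^{(1)}+(1-t)b_n^{(2)}$ for every $n\ge1$.

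The key observation is that $\sum_{n=1}^{\infty}n|b_n|^2$ is the squared norm of the sequence $(\sqrt{n}\,b_n)_n$ in $\ell^2$, so the elementary identity $|tp+(1-t)q|^2=t|p|^2+(1-t)|q|^2-t(1-t)|p-q|^2$, valid for $p,q\in\mathbb{C}$ and $0<t<1$, applied termwise and summed yields
$$
\sum_{n=1}^{\infty}n|b_n|^2=t\sum_{n=1}^{\infty}n|b_n^{(1)}|^2+(1-t)\sum_{n=1}^{\infty}n|b_n^{(2)}|^2-t(1-t)\sum_{n=1}^{\infty}n\,|b_n^{(1)}-b_n^{(2)}|^2 .
$$
All series converge because of the area inequality (\ref{p9-035}), which moreover gives $\sum_{n=1}^{\infty}n|b_n^{(j)}|^2\le\lambda^2$ for $j=1,2$. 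On the other hand, the hypothesis ${\rm area\,}(E(g))=\pi(1-\lambda^2)$ together with the equality case of the area theorem recorded just after (\ref{p9-035}) forces $\sum_{n=1}^{\infty}n|b_n|^2=\lambda^2$.

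Feeding these three facts into the identity above gives
$$
\lambda^2\le t\lambda^2+(1-t)\lambda^2-t(1-t)\sum_{n=1}^{\infty}n\,|b_n^{(1)}-b_n^{(2)}|^2=\lambda^2-t(1-t)\sum_{n=1}^{\infty}n\,|b_n^{(1)}-b_n^{(2)}|^2 ,
$$
so $t(1-t)\sum_{n=1}^{\infty}n|b_n^{(1)}-b_n^{(2)}|^2\le0$. Since $0<t<1$, every term vanishes, hence $b_n^{(1)}=b_n^{(2)}$ for all $n\ge1$ and therefore $g_1=g_2$. Thus $g$ cannot be written as a proper convex combination of two distinct members of $\mathcal{M}_{0}(\lambda)$, which is exactly the assertion that $g$ is an extreme point of $\mathcal{M}_{0}(\lambda)$.

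I do not expect a genuine obstacle here: once one notices the weighted $\ell^2$ (equivalently, strict convexity) structure underlying the area functional, the whole argument reduces to the parallelogram identity and the equality statement of the area theorem. The only points deserving a sentence of care are the absolute convergence of all the series involved, which is supplied by (\ref{p9-035}), and the remark that translating $g$ by a constant leaves ${\rm area\,}(E(g))$ unchanged, so that the normalization $b_0=0$ is harmless when invoking the area theorem.
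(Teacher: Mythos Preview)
Your argument is correct and essentially identical to the paper's own proof: both assume $g=tg_1+(1-t)g_2$, expand $\sum_{n\ge1}n|b_n|^2$ via the identity $|tp+(1-t)q|^2=t|p|^2+(1-t)|q|^2-t(1-t)|p-q|^2$, and combine the equality $\sum n|b_n|^2=\lambda^2$ with the bounds $\sum n|b_n^{(j)}|^2\le\lambda^2$ to force all differences $b_n^{(1)}-b_n^{(2)}$ to vanish. Your additional remarks on convergence and the interpretation in terms of strict convexity of the weighted $\ell^2$ norm are accurate but not needed beyond what the paper uses.
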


\begin{proof}
Suppose that a function $g\in\mathcal{M}_{0}(\lambda)$ with ${\rm area\,} (E(g))=\pi(1-\lambda^2)$ has a representation $g=tg_1+(1-t)g_2$, $0<t<1$, as a proper convex combination of two functions $g_1,g_2\in\mathcal{M}_{0}(\lambda)$. Let $g(\zeta)$ be of the form (\ref{p9-030}) and $g_1(z)$ and $g_2(z)$ have the representation of the form $g_1(\zeta)=\zeta+\sum_{n=0}^{\infty} c_n\zeta^{-n}$ and $g_2(\zeta)=\zeta+\sum_{n=0}^{\infty} d_n\zeta^{-n}$. Then the case of equality in (\ref{p9-035}) gives
\begin{align*}
\lambda^2 &= \sum_{n=1}^{\infty} n|b_n|^2 =\sum_{n=1}^{\infty} n|tc_n+(1-t)d_n|^2\\
&= \sum_{n=1}^{\infty} n\left\{t|c_n|^2+(1-t)|d_n|^2-t(1-t)|c_n-d_n|^2\right\}\\
&\le t\lambda^2+(1-t)\lambda^2-t(1-t)\sum_{n=1}^{\infty} n|c_n-d_n|^2.
\end{align*}
Since $0<t<1$, it follows that
$$
\sum_{n=1}^{\infty} n|c_n-d_n|^2 \le 0
$$
which implies $c_n=d_n$ for $n=1,2,\ldots$. In other words, $g_1=g_2$ and hence, $g$ is an extreme point of $\mathcal{M}_{0}(\lambda)$.
\end{proof}
\noindent\textbf{Acknowledgement:}
The second author thank SERB, Govt of India for the support.

\end{document}